\documentclass[oneside,english]{amsart}
\usepackage{ae,aecompl}
\usepackage[T1]{fontenc}
\usepackage[latin9]{inputenc}
\usepackage{geometry}
\geometry{verbose,tmargin=3cm,bmargin=3cm,lmargin=3cm,rmargin=3cm}
\setlength{\parskip}{\smallskipamount}
\setlength{\parindent}{0pt}
\usepackage{amstext}
\usepackage{amsthm}
\usepackage{amssymb}
\usepackage{stmaryrd}
\usepackage{setspace}
\usepackage[authoryear]{natbib}
\onehalfspacing

\makeatletter
\numberwithin{equation}{section}
\numberwithin{figure}{section}
\theoremstyle{plain}
\newtheorem{thm}{\protect\theoremname}
\theoremstyle{definition}
\newtheorem{defn}[thm]{\protect\definitionname}
\theoremstyle{plain}
\newtheorem{lem}[thm]{\protect\lemmaname}
\theoremstyle{plain}
\newtheorem{prop}[thm]{\protect\propositionname}
\theoremstyle{plain}
\newtheorem{cor}[thm]{\protect\corollaryname}
\theoremstyle{plain}
\newtheorem{fact}[thm]{\protect\factname}

\makeatother

\usepackage{babel}
\providecommand{\corollaryname}{Corollary}
\providecommand{\definitionname}{Definition}
\providecommand{\factname}{Fact}
\providecommand{\lemmaname}{Lemma}
\providecommand{\propositionname}{Proposition}
\providecommand{\theoremname}{Theorem}

\begin{document}
\title{Foundations with Imagination}
\author{Toby Meadows}
\begin{abstract}
We show that countable set theory, $ZFC^{-}+\forall x\ |x|\leq\omega$,
is unable to eliminate imaginaries. In other words, this theory cannot
provide representatives for arbitrary definable equivalence relations.
We also see that $ZFC^{-}$ and $ZFC^{-}+\exists\kappa(Inacc(\kappa)\wedge\forall x\ |x|\leq\kappa)$
also fail to eliminate imaginaries.
\end{abstract}

\maketitle
When working in a mathematical theory with foundational ambitions,
we frequently define equivalence relations. And it is often convenient
to be able to define arguably canonical representatives for each of
the underlying equivalence classes. Sometimes those representative
are members of those classes and sometimes they are not. When working
in Peano arithmetic, or any other theory with a definable well-ordering
of its domain, we may simply take the least element of each cell of
the equivalence relation. In $ZFC$, we do not always have such an
ordering,\footnote{Although note that if we add $V=HOD$ or $V=L$ to $ZFC$ we do get
a definable well-ordering of the universe. Moreover, $V=HOD$ is equivalent,
over $ZF$, to the existence of such a well-ordering. } however, we can make do with Scott's trick and use the subclass of
each cell consisting of those members that have minimal rank. In this
case, we do not get an element of the equivalence class, but we still
get an object, a set, which allows us to quantify over and thus, quite
freely theorize about the cells of the equivalence relation. Theories
that can do this are said to be able to \emph{eliminate imaginaries}.
Our goal in this paper is to show that the natural theory of countable
sets cannot eliminate imaginaries. This theory is formulated by removing
the powerset axiom from $ZFC$ and adding a new axiom stating that
every set is countable. We shall call this theory, \emph{countable
set theory}, and denote it as $ZFC_{count}^{-}$. It is a useful theory
in which a lot of mathematics can be done. For example it is well-known
to be equivalent to the theory of second order arithmetic and thus,
provides an alternative framework for the stronger axioms used in
reverse mathematics \citep{EnayatLelykFOCat,Simpson}. It follows
from our results that the commonly used, $ZFC^{-}$, also fails to
eliminate imaginaries.

We shall start in Section \ref{sec:Motivating-remarks} with some
basic definitions. Then in Section \ref{sec:A-warmup-theorem}, we'll
give a warmup theorem showing that second order arithmetic cannot
eliminate imaginaries. I'm grateful to Asaf Karagila for sharing the
key idea behind this claim. Finally, in Section \ref{sec:The-main-theorem},
we generalize the idea of Section \ref{sec:A-warmup-theorem} in order
to prove our main claim. 

\section{Motivating remarks\label{sec:Motivating-remarks}}

Let's start with the main definition. 
\begin{defn}
Let us say that a theory $T$ \emph{eliminates imaginaries}\footnote{See Section 4.4 in \citep{Hodges} for further information and discussion
of the history of this concept. In that setting, this is called the
\emph{uniform elimination of imaginaries}. However, in this foundational
setting it seems more convenient to elide the mention of uniformity
as the non-uniform version appears less interesting here.} if $T$ proves that some formula $E$ defines an equivalence relation,
then there is a formula $F$ such that $T$ proves that $F$ defines
a function that respects $E$; i.e., $T$ proves that for all $\bar{x},\bar{y}$,
$\bar{x}E\bar{y}$ iff $F(\bar{x})=F(\bar{y})$.
\end{defn}

In the definition above $\bar{x}$ is intended to represent a finite
sequence of variables $x_{0},...,x_{n}$. However, since we will be
working with theories that can represent finite sequences as objects,
we may ignore this aspect of things and work with the version of this
definition where we just have single variables $x$ and $y$. 

It is not difficult to see that $PA$, $ZF$ and their extensions
are all able to eliminate imaginaries. Moreover, this is a feature
that is frequently used in foundational contexts. For a couple of
classic examples, consider the use of Scott's trick: in defining cardinals
without choice in $ZF$; and in the definition of ultrapowers from
large cardinals in $ZFC$. The elimination of imaginaries also allows
us to avoid the use of quotients in the interpretation of other theories.\footnote{In a quotient interpretation, the identity relation is not preserved
and is rather interpreted as an equivalence class from the perspective
of the interpreting theory. } Or in the language of \citet{bHMoritaEquiv}, a failure to eliminate
imaginaries leads to a failure of Morita completeness and, in particular,
a failure to deliver quotient sorts. The ability to eliminate imaginaries
has also figured in a lot of recent work investigating properties
of theories related to categoricity. For example, building on work
of \citet{freire2020biinterpretation}, \citet{EnayatLelykFOCat}
have shown that internally categorical, sequential theories that eliminate
imaginaries are tight. Informally speaking, an \emph{internally categorical
}theory is a theory that can see that it only has one model up to
isomorphism.\footnote{It does this in a first order, rather than second order way, by duplicating
the language and adding the same axioms to the new language. Axiom
schemata then use formulae from the the combined language. For more
detail, see \citep{VaanZerm,MaddyVaananenCat}.} A theory is \emph{tight} if any pair of bi-interpretable extensions
of that theory must be the same theory, in the sense that they have
the same consequences. Less formally, a tight theory cannot be extended
in meaningfully different ways. Our proof below then establishes that
there is an internally categorical theory, $ZFC_{count}^{-}$, that
cannot eliminate imaginaries, but is still tight.\footnote{See Theorem 19 of \citep{meadows2025intcatgenericmult} for a proof
that $ZFC_{count}^{-}$ is internally categorical; and see Corollary
2.5.1 of \citep{enayat2017variations} or Theorem 29 of \citep{meadows2025intcatgenericmult}
for a proof that $ZFC_{count}^{-}$. Remark 8(b) in \citep{EnayatLelykFOCat}
then observes that solid theories are always tight.} Thus, the elimination of imaginaries is not required for internally
categorical theories to be tight.

\section{A warmup theorem\label{sec:A-warmup-theorem}}

Let $SOA$ be the theory of second order arithmetic with full comprehension
and choice for all definable sets of reals indexed by naturals.\footnote{See \citep{EnayatLelykFOCat} and \citep{Simpson} for more details.
In the latter, this theory is denoted as $\Sigma_{\infty}^{1}-AC_{0}$
noting that the definable choice principles end up implying the comprehension
principles. See Section VII.6 and Lemma VII.6.6 in \citep{Simpson}
for more information. Also note that $SOA$ is also often notated
as $Z_{2}$ while $PA$ is often denoted as $Z_{1}$ \citep{HilbertGrundlagen}.} As a warmup to our main result about $ZFC_{count}^{-}$, we aim to
show that $SOA$ cannot eliminate imaginaries using an automorphism
argument based on forcing. More specifically, we will define an equivalence
relation $E$ on $\mathbb{R}$ and then suppose toward a contradiction
that we can define $F:\mathbb{R}\to\mathbb{R}$ that respects $E$.
We shall then deliver some $g\in\mathbb{R}$ and an automorphism of
the underlying forcing that moves $g$ to some $g^{*}$ where $gEg^{*}$
but $F(g)\neq F(g^{*})$ giving us our desired contradiction. 

First we describe the required forcing and some helpful technical
vocabulary. We let $\mathbb{Q}=Add(\omega,\omega_{2})$, so $\mathbb{Q}$
consists of finite partial functions $p:\omega\times\omega_{2}\rightharpoondown2$
ordered by reverse inclusion. Given $p\in\mathbb{Q}$, let us say
that the $\omega$\emph{-domain} of $p$, abbreviated $\omega\text{-}dom(p)$,
is the set of $n$ such that $p(n,\alpha)$ is defined for some $\alpha<\omega_{2}$.
Or in more formal notation, we write 
\[
\omega\text{-}dom(p)=\{n\in\omega\ |\ p(n,\alpha)\downarrow\text{ for some }\alpha\in\omega_{1}\}.
\]
Let us then say that an automorphism $\sigma:\mathbb{Q}\cong\mathbb{Q}$
is \emph{$\omega$-based }if it is determined by a permutation of
the first coordinate, or $\omega$-domain, of conditions in $\mathbb{Q}$.
More specifically, $\sigma$ is such that there is some permutation
$\sigma:\omega\to\omega$ such that 
\[
\sigma(p)(\sigma(n),\alpha)=p(n,\alpha)
\]
whenever the right hand side is defined. The following lemma is then
the key to the claim. We shall use it to ensure that $F(g)\neq F(g^{*})$
below. 
\begin{lem}
(Karagila)\footnote{I'm grateful to Asaf for letting me include this proof here. The underlying
idea is obviously elegant. Any ugliness in its presentation is due
to me.} Suppose $G$ is $\mathbb{Q}$-generic over $V$. Let $x\subseteq V$
be such that $x\in V[G]\backslash V$. Then for all $p\in G$ and
$\dot{x}\in V^{\mathbb{Q}}$ with $\dot{x}_{G}=x$ there is some $\omega$
-based automorphism $\sigma:\mathbb{Q}\cong\mathbb{Q}$ with $\sigma\in V$
such that $\sigma(p)=p$ and $(\sigma\dot{x})_{G}\neq x$.\label{Karagila}
\end{lem}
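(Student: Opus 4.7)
The plan is a standard automorphism-density argument. Since $x \notin V$ while $A := \{y \in V : p \Vdash \check{y} \in \dot{x}\}$ lies in $V$ and is a subset of $x$ (because $p \in G$), we have $A \subsetneq x$. Pick $y \in x \setminus A$; then $p$ does not force $\check{y} \in \dot{x}$, so some $q_0 \leq p$ in $V$ forces $\check{y} \notin \dot{x}$. It suffices to find an $\omega$-based $\sigma \in V$ with $\sigma(p) = p$ and $\sigma(q_0) \in G$: for then $\sigma(q_0) \Vdash \check{y} \notin \sigma\dot{x}$ — since the canonical name $\check{y}$ for $y \in V$ is fixed by every automorphism of $\mathbb{Q}$ — so $y \notin (\sigma\dot{x})_G$, while $y \in x$ yields $(\sigma\dot{x})_G \neq x$.

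To produce $\sigma$, set $F_0 := \omega\text{-}dom(p)$ and $F_1 := \omega\text{-}dom(q_0) \setminus F_0$, both finite, and consider the density set
\[
D := \{r \leq p : r \supseteq \sigma_\pi(q_0) \text{ for some permutation } \pi \in V \text{ fixing } F_0 \text{ pointwise with } \pi(F_1) \cap \omega\text{-}dom(r) = \emptyset\}.
\]
Given any $r \leq p$, the finiteness of $F_0 \cup \omega\text{-}dom(r)$ together with the infinitude of $\omega \setminus (F_0 \cup \omega\text{-}dom(r))$ lets us pick such a $\pi \in V$; the induced $\sigma_\pi$ then satisfies $\sigma_\pi(p) = p$ (because $\pi$ fixes $\omega\text{-}dom(p)$ pointwise and permutes the complement), and $\sigma_\pi(q_0)$ lives on fresh coordinates outside $\omega\text{-}dom(r)$ away from $F_0$, so $r \cup \sigma_\pi(q_0) \in D$ extends $r$. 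By genericity $D$ meets $G$, and upward closure of $G$ yields $\sigma_\pi(q_0) \in G$ for the witnessing $\pi$.

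The main obstacle concerns the interaction of $\sigma_\pi(q_0)$ with $r$ on $F_0$-coordinates: since $\pi$ fixes $F_0$ pointwise we have $\sigma_\pi(q_0) \restriction F_0 = q_0 \restriction F_0$, and if $q_0$ extended $p$ nontrivially on $F_0$ (by specifying values at second-coordinates outside $p$'s support) this could conflict with $r$, and ultimately with the generic on $F_0$. The cleanest fix is to refine the choice of $q_0$ — either by choosing $y$ so that a supporting $q_0$ extends $p$ only outside $F_0$, or via a product decomposition $\mathbb{Q} \cong \mathbb{Q}_0 \times \mathbb{Q}_1$ at $F_0$ together with a case-split on whether $x$ already lies in $V[G_0]$ (recursing into $\mathbb{Q}_0$ if so) — so that $q_0 \restriction F_0$ automatically agrees with the generic. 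The remaining verifications (automorphism invariance of check-names, and that $\pi \restriction F_0 = \mathrm{id}$ delivers $\sigma(p) = p$) are routine but are where directional sign errors about the action of $\sigma$ on conditions typically appear.
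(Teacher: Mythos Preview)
Your overall strategy matches the paper's: find $y$ whose membership in $\dot{x}$ is undecided by $p$, pick a condition deciding it the ``wrong'' way, and translate that condition into $G$ via an $\omega$-based automorphism fixing $p$. The paper organizes this differently: it fixes \emph{both} $q\in G$ with $q\Vdash u\in\dot{x}$ and $r\leq p$ with $r\Vdash u\notin\dot{x}$, uses genericity to place a translate $r^{*}\in G$ of $r\restriction([m,m+n)\times\omega_{2})$ at fresh $\omega$-coordinates $[k,k+n)$, sets $s=q\cup r^{*}\in G$, and takes $\sigma$ to be the swap of $[m,m+n)$ with $[k,k+n)$; the punchline is $\sigma(s)\leq r$, whence $s\Vdash u\notin\sigma\dot{x}$. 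Your density formulation is a reasonable repackaging of the same move.

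The obstacle you isolate, however, is real, and your proposed fixes do not remove it. In fact the lemma is false as stated. Take $\dot{x}=\{(\check{n},\{\langle(0,n),0\rangle\}):n\in\omega\}$, so that $x=\dot{x}_{G}=\{n\in\omega:g(0,n)=0\}$ is a new real, and let $p\in G$ be any condition with $\omega\text{-}dom(p)=\{0\}$. Any $\omega$-based $\sigma$ with $\sigma(p)=p$ has underlying permutation fixing $0$, hence fixes every condition appearing in $\dot{x}$, so $\sigma\dot{x}=\dot{x}$ and $(\sigma\dot{x})_{G}=x$. Neither your product decomposition nor a refined choice of $q_{0}$ helps here, since every condition deciding any $n\in\dot{x}$ already lives entirely on column $0$. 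The paper's argument meets the same wall at the bare assertion $\sigma(s)\leq r$: this needs $q\restriction([0,m)\times\omega_{2})\supseteq r\restriction([0,m)\times\omega_{2})$, which is not secured by the stated WLOG and fails in this example (where $q$ and $r$ are incompatible conditions both supported on column $0$). What the downstream theorem actually uses is only the conclusion for a \emph{suitably chosen} name $\dot{x}$, not for an arbitrary one.
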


\begin{proof}
Let $x$ and $\dot{x}$ be as assumed in the statement of the theorem.
Then since $x\subseteq V$ is not an element of $V$, $x$ must be
infinite and so there must be some $u\in x$ such that $p$ cannot
force that $u\in\dot{x}$. Thus, we may fix $q,r\leq p$ such that
$q\in G$
\[
q\Vdash u\in\dot{x}\ \text{and}\ r\Vdash u\notin\dot{x}.
\]
We assume without loss of generality that $\omega\text{-}dom(p)=m$
and $\omega\text{-}dom(q)=\omega\text{-}dom(r)=m+n$. Using the genericity
of $G$, we may fix a condition $r^{*}\in G$ whose $\omega$-domain
is $[k,k+n)$ for some $k>m+n$ and which is such that $r^{*}$ is
the translation of $r$ that shifts the part of $r$ occurring in
$[m,m+n)$ up to $[k,k+n)$. More precisely, $r^{*}\in G$ is such
that for all $i\in\omega$ and $\alpha\in\omega_{2}$
\[
r^{*}(i,\alpha)=\begin{cases}
r(i,\alpha) & \text{if }i<m\text{ and }r(i,\alpha)\downarrow,\\
r(i-(k-m),\alpha) & \text{if }i\geq k\text{ and }r(i,\alpha)\downarrow.
\end{cases}
\]
Then we let $s=q\cup r^{*}$ and note that by the filter conditions
$s\in G$. Next let $\sigma:\omega\to\omega$ be the permutation
that swaps the interval $[m,m+n)$ with $[k,k+n)$ and let $\sigma:\mathbb{Q}\cong\mathbb{Q}$
be the $\omega$-based automorphism determined by it. Observe then
that $\sigma(s)\leq r$ and so we see that
\[
\sigma(s)\Vdash u\notin\dot{x}
\]
and then, noting that $\sigma$ is involutive,
\[
s\Vdash u\notin\sigma\dot{x}.
\]
Thus, $u\notin(\sigma\dot{x})_{G}$ and so $(\sigma\dot{x})_{G}\neq x$
as required.
\end{proof}
With this in hand, the result about $SOA$ follows straightforwardly.
\begin{thm}
$SOA$ cannot eliminate imaginaries.\label{thm:SOA=0000ACelim} 
\end{thm}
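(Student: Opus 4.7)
The plan is to fix a parameter-free definable equivalence relation $E$ on reals whose classes are countable but numerous, and to use the symmetry provided by Lemma~\ref{Karagila} to build a generic $g$ together with an $E$-equivalent $g^{*}$ satisfying $F(g)\neq F(g^{*})$. The natural choice is
\[
gEg'\iff g\triangle g'\text{ is finite},
\]
expressible in $SOA$ as $\exists n\,\forall m\geq n\ g(m)=g'(m)$. Suppose toward contradiction that a parameter-free formula $\varphi$ eliminates $E$, i.e.\ that $SOA$ proves $\varphi$ defines a total function $F$ with $F(x)=F(y)\iff xEy$.

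I would work externally: fix a countable transitive $M\models ZFC+CH$, force with $\mathbb{Q}=Add(\omega,\omega_{2})^{M}$ to obtain $M[G]$, and use the fact that $\mathcal{R}^{M[G]}$ is an $\omega$-model of $SOA$, so $\varphi$ still defines such an $F$ there. Consider the column generics $g_{\alpha}=\{n:G(n,\alpha)=1\}$ for $\alpha<\omega_{2}$. For $\alpha\neq\beta$, mutual genericity forces $g_{\alpha}\triangle g_{\beta}$ to be infinite, so the $g_{\alpha}$ occupy $\aleph_{2}$ distinct $E$-classes, whence $\{F(g_{\alpha}):\alpha<\omega_{2}\}$ contains $\aleph_{2}$ reals in $M[G]$. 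By $CH$ in $M$ and the ccc of $\mathbb{Q}$, $|\mathcal{R}^{M}|^{M[G]}=\aleph_{1}$, so some $\alpha_{*}<\omega_{2}$ must satisfy $F(g_{\alpha_{*}})\in M[G]\setminus M$.

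Now apply Lemma~\ref{Karagila} to $x=F(g_{\alpha_{*}})$ with a name $\dot{F}(\dot{g_{\alpha_{*}}})$ and any $p\in G$: this yields an $\omega$-based $\sigma\in M$ fixing $p$ with $(\sigma\dot{F}(\dot{g_{\alpha_{*}}}))_{G}\neq F(g_{\alpha_{*}})$. Because $\varphi$ carries no parameters, the standard symmetry property $\sigma\dot{F}(\dot{g_{\alpha_{*}}})=\dot{F}(\sigma\dot{g_{\alpha_{*}}})$ delivers $F(g_{\alpha_{*}}^{*})\neq F(g_{\alpha_{*}})$ for $g_{\alpha_{*}}^{*}=(\sigma\dot{g_{\alpha_{*}}})_{G}$. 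Inspection of the proof of Lemma~\ref{Karagila} shows the constructed $\sigma$ merely swaps two finite intervals of $\omega$, hence has finite support, so $g_{\alpha_{*}}\triangle g_{\alpha_{*}}^{*}$ is finite and therefore $g_{\alpha_{*}}Eg_{\alpha_{*}}^{*}$. This contradicts $F$ respecting $E$.

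The main obstacle I expect is locating a generic whose $F$-image escapes $M$, since otherwise Lemma~\ref{Karagila} does not apply to pull $F(g_{\alpha_{*}})$ off itself; the cardinality mismatch between the $\aleph_{1}$ reals of $M$ and the $\aleph_{2}$ distinct $E$-classes supplied by the column generics is what overcomes this obstacle.
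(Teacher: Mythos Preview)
Your argument is correct and follows the same architectural template as the paper's proof: force with $Add(\omega,\omega_{2})$, use a cardinality gap to locate a column generic whose $F$-image is new, and then invoke Lemma~\ref{Karagila} to obtain an automorphism that fixes a witnessing condition but moves the $F$-image, contradicting that $F$ respects the equivalence. The genuine difference is your choice of equivalence relation. The paper uses mutual constructibility, $xEy\iff L[x]=L[y]$, whereas you use $E_{0}$. Both choices work for $SOA$, but they have different costs and benefits. Your $E_{0}$ is Borel and arguably more elementary, but to conclude $g_{\alpha_{*}}E_{0}g_{\alpha_{*}}^{*}$ you must go beyond the statement of Lemma~\ref{Karagila} and inspect its proof to see that the permutation produced has finite support; the lemma as stated only promises an $\omega$-based automorphism. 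With mutual constructibility this inspection is unnecessary: any $\omega$-based $\sigma\in L$ gives $(\sigma\dot{g}_{\alpha})_{G}=g_{\alpha}\circ\sigma^{-1}$, hence $L[g_{\alpha}]=L[(\sigma\dot{g}_{\alpha})_{G}]$ automatically. More importantly, the paper chooses mutual constructibility because the $SOA$ result is a warmup for the main theorem about $ZFC_{count}^{-}$, and there your $E_{0}$ provably fails: its cells are countable sets, so $F(x)=[x]_{E_{0}}$ eliminates it outright in countable set theory (the paper makes exactly this point in Section~\ref{sec:The-main-theorem}). So your route is slightly lighter for the warmup in isolation, while the paper's route is chosen for its portability to the harder target.
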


\begin{proof}
We begin with some setup by letting $G$ be $\mathbb{Q}$-generic
over $L$, $g=\bigcup G:\omega\times\omega_{2}\to2$, and for all
$\alpha<\omega_{2}$, $g_{\alpha}:\omega\to2$ be such that $g_{\alpha}(n)=g(n,\alpha)$.
We then let $\dot{g}_{\alpha}$ be the canonical name for each $g_{\alpha}$.
Next we fix our equivalence relation by letting $E\subseteq\mathbb{R}\times\mathbb{R}$
be mutual constructibility; i.e., we let $xEy$ iff $L[x]=L[y]$.
Note that this is a $\Delta_{2}^{1}$ relation and thus, can be defined
in second order arithmetic. Also note that if $\sigma:\mathbb{Q}\cong\mathbb{Q}$
is a $\omega$-based automorphism from $L$, then for all $\alpha<\omega_{2}$,
$(\dot{g}_{\alpha})_{G}E(\sigma\dot{g}_{\alpha})_{G}$.

It will suffice to show that no function $F:\mathbb{R}\to\mathbb{R}$
can be defined over $L[G]$ such that for all $x,y\in\mathbb{R}$
\[
yEz\leftrightarrow F(y)=F(z).
\]
This will be enough since if such a function cannot be defined in
$L[G]$, then it also cannot be defined in $L[G]$'s version of the
standard model of $SOA$, which is itself is definable within $L[G]$.
We suppose toward a contradiction that there is such a function, $F$.
Since $|L_{\omega_{1}}|^{L[G]}=\omega_{1}$, we may fix $\alpha<\omega_{2}$
such that $F(g_{\alpha})\notin L$. Thus, $F(g_{\alpha})=x$ for some
$x\in\mathbb{R}\cap(N\backslash L)$. Next let $\dot{x}\in L^{\mathbb{P}}$
be such that $\dot{x}_{G}=x$ and fix $p\in G$ such that 
\[
p\Vdash F(\dot{g}_{\alpha})=\dot{x}.
\]
Using Lemma \ref{Karagila}, we may then fix an $\omega$-based automorphism
$\sigma:\mathbb{Q}\cong\mathbb{Q}$ from $L$ such that $\sigma(p)=p$
and $(\sigma\dot{x})_{G}\neq x$. Thus, we obtain 
\[
p\Vdash F(\sigma\dot{g}_{\alpha})=\sigma\dot{x}
\]
and so if we let $g_{\alpha}^{*}=(\sigma\dot{g}_{\alpha})_{G}$ and
$x^{*}=(\sigma\dot{x})_{G}$, we see that $F(g_{\alpha}^{*})=x^{*}$,
$g_{\alpha}Eg_{\alpha}^{*}$ but $x\neq x^{*}$, which is a contradiction.
\end{proof}
It is also worth noting that a softer, albeit less direct proof can
be given for Theorem \ref{thm:SOA=0000ACelim}.\footnote{This observation is due to Ali Enayat and the author independently.}
We start by recalling a couple of facts from the literature. First
we note that if we have a pair of sequential theories that are bi-interpretable
via identity-preserving interpretations, then those theories are definitionally
equivalent \citep{VisserFriedBitoSyn}.\footnote{See Theorem 5.4 in \citep{VisserFriedBitoSyn}.}
We then note that $SOA$ and $ZFC_{count}^{-}$ are: both sequential,
bi-interpretable, but not definitionally equivalent \citep{EnayatLelykFOCat,chenMeadowsTeasing}.
Given Visser and Friedman's theorem, we see that the interpretations
witnessing their bi-interpretability cannot be identity-preserving.
In fact, as we'll discuss in more detail below, $SOA$ interprets
$ZFC_{count}^{-}$ via a quotient interpretation, while $ZFC_{count}^{-}$
does interpret $SOA$ with an identity-preserving interpretation by
simply restricting the domain to the natural numbers and their powerset.
Now suppose toward a contradiction that $SOA$ could eliminate imaginaries.
Then $SOA$ could replace its quotient interpretation of $ZFC_{count}^{-}$
with an identity-preserving interpretation by delivering representatives
for each of the equivalence classes. But then $SOA$ and $ZFC_{count}^{-}$
would be bi-interpretable via identity-preserving interpretations
and Visser and Friedman's theorem would tell us that they were definitionally
equivalent, which we already know is impossible.\footnote{Note that we cannot, so to speak, turn this around and also show that
$ZFC_{count}^{-}$ fails to eliminate imaginaries via a similar indirect
strategy. The reason for this is that $ZFC_{count}^{-}$ already interprets
$SOA$ with an identity-preserving interpretation. Thus, there are
no imaginaries to eliminate.} Nonetheless, it is perhaps valuable to have a more direct proof and
it provides a simple strategic perspective on the somewhat longer
proof below. 

\section{The main theorem\label{sec:The-main-theorem}}

We are now ready to consider $ZFC_{count}^{-}$. To make things precise,
we start with the standard axioms of $ZFC$ based on the Axiom of
Collection rather than Replacement.\footnote{See \citep{GitmanZFC-} for an enlivening discussion of the pathology
that ensues when only the Replacement schema is available.} We then discard the Powerset Axiom and replace it with a new axiom
stating that everything is countable; or more formally, $\forall x\ |x|\leq\omega$.
Before we get down to business, we first make some remarks about why
the theorem appears to be a little more difficult than it may have
at first glance. $ZFC_{count}^{-}$ theory is well-known to be bi-interpretable
with $SOA$. Thus, we might hope that some cosmetic modifications
to the proof above would address our target. This does not appear
to be the case. To see why, let's consider what happens if we try
the obvious thing. The bi-interpretation of $ZFC_{count}^{-}$ and
$SOA$ relies on the fact that every set in a model of $ZFC_{count}^{-}$
can be coded using a well-founded, extensional relation on its version
of $\omega$. Given this, we might hope to modify the proof above
as follows. Using the same equivalence relation $E$ and noting that
$\mathbb{HC}$ is a natural model of $ZFC_{count}^{-}$, we might
suppose toward a contradiction that there is a definable function
$G:\mathbb{R}\to\mathbb{HC}$ that respects $E$. Then we might hope
to find a definable function that $H:\mathbb{HC}\to\mathbb{R}$ that
takes $x\in\mathbb{HC}$ and returns some well-founded, extensional
$R_{x}\subseteq\omega^{2}$ that codes $x$. Then the rest of the
argument would follow essentially the same lines. However, the problem
with this strategy is that there will many different sets $S\subseteq\omega^{2}$
that code $x$. Moreover, it can be seen that there is no uniform
way of defining a function like $H$ in $ZFC_{count}^{-}$.\footnote{This follows from one strategy for showing that $ZFC_{count}^{-}$
and $SOA$ are not definitionally equivalent \citep{EnayatLelykFOCat}.
More particularly, if we start with a model of $V=L$ and then force
to add a Cohen real, $c$, it can be seen that $L[c]$ cannot define
a linear ordering of $\mathbb{HC}^{L[c]}$. But we can clearly define
a linearly ordering the reals of $L[c]$, so $L[c]$ cannot define
an injection from $\mathbb{HC}^{L[c]}$ into $\mathbb{R}^{L[c]}$.
See \citep{chenMeadowsTeasing} for more details.} For these reasons, we will take a more direct route that generalizes
the strategy of Theorem \ref{thm:SOA=0000ACelim}'s proof by generalizing
Karagila's Lemma \ref{Karagila}. 

One might also wonder if we could make do with a less exotic equivalence
relation than mutual constructibility. While there will obviously
be other relations that also work, something in this vicinity seems
to be required. It will be instructive consider why this is the case.
First, let us note that any equivalence relation $E$ on $\mathbb{R}$
whose cells are always sets and not proper classes is doomed to failure.
To see this, note that we can simply let $F$ be such that for all
$x\in\mathbb{R}$, $F(x)=[x]_{E}$ where the latter is just the set
of those $y$ such that $yEx$. For the perhaps canonical example
of this kind,\footnote{Indeed, this was the first thing I tried and the first thing that
was suggested to me when I spoke to others about this problem.} let $\mathbb{R}=2^{\omega}$ and consider the relation $E_{0}$ which
deems reals to be equivalent if they eventually, always agree; i.e.,
$xE_{0}y$ iff there is some $n\in\omega$ such that for all $m>n$,
$x(m)=y(m)$.\footnote{To see that $[x]_{E_{0}}$ is countable, note that it's identical
to $\bigcup_{n\in\omega}D_{n}$ where each $D_{n}$ is the set of
those $y\in\mathbb{R}$ that only differ from $x$ below $n$. } Thus, it is clear that our equivalence relation will need to be a
proper class, which in the context of countable set theory means that
its cells must be uncountable. So that's a natural choice for such
an equivalence relation? Letting $\mathbb{R}=\mathcal{P}(\omega^{2})$,
we might think of reals as coding binary relations on the naturals.
An obvious choice of equivalence relation is then isomorphism between
those relations. Call this $E_{iso}$. Clearly, $[x]_{E_{iso}}$is
uncountable for all $x\in\mathbb{R}$ so we get around the first hurdle,
but there is another.\footnote{I'm grateful to Jason Chen for pointing this out to me.} 
\begin{thm}
($ZFC^{-}$, \citealp{Friedman2000}) There is a definable function
$F_{iso}:\mathbb{R}\to\mathbb{HC}$ such that for all $x,y\in\mathbb{R}$
\[
xE_{iso}y\ \Leftrightarrow\ F_{iso}(x)=F_{iso}(y).
\]
\end{thm}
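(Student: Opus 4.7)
The plan is to define $F_{iso}$ via Scott's analysis of countable structures, assigning to each $x \in \mathcal{P}(\omega^2)$ a hereditarily countable invariant of $\mathcal{A}_x = (\omega, x)$ that depends only on its isomorphism class. Working in $ZFC^{-}$ we have $\omega_1$ and transfinite recursion on it, which is all the set-theoretic machinery needed.

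Given $x$, I would define the approximants $\tau_\alpha^x(\bar{n})$ for $\alpha < \omega_1$ and $\bar{n} \in \omega^{<\omega}$ by transfinite recursion:
\[
\tau_0^x(\bar{n}) = \text{the qf-type of } \bar{n} \text{ in } \mathcal{A}_x, \qquad \tau_{\alpha+1}^x(\bar{n}) = \{\tau_\alpha^x(\bar{n}\frown\langle m\rangle) : m \in \omega\},
\]
and $\tau_\lambda^x(\bar{n}) = \langle \tau_\beta^x(\bar{n}) : \beta < \lambda \rangle$ at limits $\lambda$. Each $\tau_\alpha^x(\bar{n})$ is manifestly in $\mathbb{HC}$. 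The equivalence relation on $\omega^{<\omega}$ given by $\bar{n} \sim_\alpha^x \bar{m} \Leftrightarrow \tau_\alpha^x(\bar{n}) = \tau_\alpha^x(\bar{m})$ refines monotonically as $\alpha$ grows, and a strictly refining chain of equivalence relations on a countable set has countable length; hence there is a least $\alpha^*(x) < \omega_1$ at which the sequence stabilizes. I would then set $F_{iso}(x) = \langle \alpha^*(x),\, \tau_{\alpha^*(x)}^x(\emptyset),\, \tau_{\alpha^*(x)+1}^x(\emptyset)\rangle$, a hereditarily countable Scott-sentence style invariant of $\mathcal{A}_x$.

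The forward direction $x E_{iso} y \Rightarrow F_{iso}(x) = F_{iso}(y)$ is a routine induction: any isomorphism $\sigma : \mathcal{A}_x \cong \mathcal{A}_y$ satisfies $\tau_\alpha^x(\bar{n}) = \tau_\alpha^y(\sigma\bar{n})$ by induction on $\alpha$, the successor step using that $\sigma$ permutes $\omega$ so $\{\tau_\alpha^x(\bar{n}\frown\langle m\rangle) : m\in\omega\} = \{\tau_\alpha^y(\sigma\bar{n}\frown\langle k\rangle) : k\in\omega\}$. Taking $\bar{n} = \emptyset$ yields equality of Scott invariants. For the converse, assuming $F_{iso}(x) = F_{iso}(y)$, I would run the classical Scott back-and-forth: starting from $(\emptyset,\emptyset)$, maintain $\tau_{\alpha^*}^x(\bar{n}) = \tau_{\alpha^*}^y(\bar{m})$, and use the equation
\[
\{\tau_{\alpha^*}^x(\bar{n}\frown\langle p\rangle) : p \in \omega\} = \{\tau_{\alpha^*}^y(\bar{m}\frown\langle q\rangle) : q \in \omega\}
\]
(which is precisely the equality of the next approximants) to extend on either side while enumerating all of $\omega$ on each.

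The only nontrivial step is the closure-ordinal bound in the absence of powerset. But this is a pure counting argument on $\omega^{<\omega}$ and never invokes $\mathcal{P}(\omega)$ as a set; all approximants live in $\mathbb{HC}$ throughout, so Collection, Choice, and the existence of $\omega_1$ suffice to carry the whole construction out in $ZFC^{-}$.
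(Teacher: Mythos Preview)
Your approach is the same as the paper's: assign to each $x$ its canonical Scott invariant. The paper's proof is a one-liner --- let $F_{iso}(x)$ be the Scott sentence $\varphi_x\in\mathcal{L}_{\omega_1\omega}$ of $(\omega,x)$, coded in $\mathbb{HC}$; a uniform definition can be given that depends only on the isomorphism type, and $y\models\varphi_x$ iff $y\cong x$ --- and your argument is a set-theoretic unpacking of exactly that construction.

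That said, there is a genuine gap in your back-and-forth. You maintain $\tau_{\alpha^*}^x(\bar n)=\tau_{\alpha^*}^y(\bar m)$ and then invoke the equality $\tau_{\alpha^*+1}^x(\bar n)=\tau_{\alpha^*+1}^y(\bar m)$ to extend. But stabilisation at $\alpha^*$ only says that $\tau_{\alpha^*}$ determines $\tau_{\alpha^*+1}$ \emph{within a single structure}; it does not tell you that a tuple in $\mathcal{A}_x$ and a tuple in $\mathcal{A}_y$ sharing an $\alpha^*$-invariant must share an $(\alpha^*{+}1)$-invariant. Your $F_{iso}$ supplies the $(\alpha^*{+}1)$-equality at $(\emptyset,\emptyset)$, so the first extension step goes through, but after one step you retain only $\tau_{\alpha^*}$-agreement and cannot iterate. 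This is exactly why the classical Scott sentence carries the additional conjunct $\bigwedge_{\bar a}\forall\bar v\,(\varphi^{\alpha^*}_{\bar a}(\bar v)\to\varphi^{\alpha^*+1}_{\bar a}(\bar v))$: it records the \emph{function} from $\alpha^*$-types to $(\alpha^*{+}1)$-types. The fix is to enlarge your invariant by the set $\{(\tau_{\alpha^*}^x(\bar n),\tau_{\alpha^*+1}^x(\bar n)):\bar n\in\omega^{<\omega}\}$; this is isomorphism-invariant for the same reason as your other data, lies in $\mathbb{HC}$, and lets you maintain the stronger invariant $\tau_{\alpha^*+1}^x(\bar n)=\tau_{\alpha^*+1}^y(\bar m)$ throughout the back-and-forth.
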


The proof is so delightfully simple it would be a crime not to state
it. We let $F_{iso}(x)$ be the infinitary sentence $\varphi_{x}$
of $\mathcal{L}_{\omega_{1}\omega}$ known as the Scott sentence for
$x$ construed as a binary relation on $\omega$. The Scott sentence
$\varphi_{x}$ for some $x\in\mathbb{R}$ is naturally coded as an
element of $\mathbb{HC}$ and is such that $y\models\varphi_{x}$
iff $y\cong x$. Moreover, a uniform procedure for defining a Scott
sentence can be devised that is only sensitive to the structural,
or model-theoretic, aspects of some $x\subseteq\omega\times\omega$.
Thus, if $x\cong y$, then we also see that $\varphi_{x}=\varphi_{y}$.\footnote{See \citep{KeislerInf} or \citep{Barwise} for more information about
Scott sentences and the infinitary logic $\mathcal{L}_{\omega_{1}\omega}$.}

So there goes another natural equivalence relation. It's not so clear
what to blame, but it is perhaps worth noting that $E_{iso}$ is relatively
complex as it is an analytic, or $\Sigma_{1}^{1}$, relation that
is not Borel. In contrast, mutual constructibility is significantly
more complex again as a $\Delta_{2}^{1}$ relation. Among other things,
this extra complexity seems to do the trick. 

We are now ready to state and prove the main theorem of this little
paper. 
\begin{thm}
$ZFC_{count}^{-}$ cannot eliminate imaginaries.\label{thm:main}
\end{thm}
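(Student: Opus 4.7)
The plan is to mimic the architecture of Theorem~\ref{thm:SOA=0000ACelim} for a candidate $F\colon\mathbb{R}\to\mathbb{HC}$, reducing everything to a generalization of Lemma~\ref{Karagila} that covers any hereditarily countable non-constructible set, not merely a real. Work over $L$ with $\mathbb{Q}=Add(\omega,\omega_{2})$, its generic $G$, columns $g_{\alpha}$, and the mutual-constructibility relation $E$ on $\mathbb{R}$; any $\omega$-based $\sigma\in L$ still yields $(\dot{g}_{\alpha})_{G}\,E\,(\sigma\dot{g}_{\alpha})_{G}$. Since $\mathbb{HC}^{L[G]}$ is a definable standard model of $ZFC_{count}^{-}$ inside $L[G]$, it suffices to rule out a definable $F\colon\mathbb{R}\to\mathbb{HC}$ over $L[G]$ respecting $E$.

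Suppose toward a contradiction that such an $F$ exists. The pigeonhole step from the warmup still works: $|\mathbb{HC}\cap L|^{L[G]}=\omega_{1}$, while distinct columns are mutually Cohen and hence $E$-inequivalent, supplying $\omega_{2}$ many $E$-inequivalent inputs, so some $\alpha<\omega_{2}$ has $x:=F(g_{\alpha})\in L[G]\setminus L$. Fix $p\in G$ and $\dot{x}\in L^{\mathbb{Q}}$ with $p\Vdash F(\dot{g}_{\alpha})=\dot{x}$. If one can produce an $\omega$-based $\sigma\in L$ with $\sigma(p)=p$ and $(\sigma\dot{x})_{G}\neq x$, the final lines of the proof of Theorem~\ref{thm:SOA=0000ACelim} apply verbatim to close the contradiction. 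Everything thus reduces to the following strengthening of Lemma~\ref{Karagila}: for any $x\in L[G]\setminus L$, any $p\in G$, and any name $\dot{x}$ with $\dot{x}_{G}=x$, some $\omega$-based $\sigma\in L$ satisfies $\sigma(p)=p$ and $(\sigma\dot{x})_{G}\neq x$.

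I would prove this generalization by $\in$-induction on $x$, with base case $x\subseteq L$ handled by Lemma~\ref{Karagila} as stated. In the inductive step, fix $(\dot{y},q)\in\dot{x}$ with $q\in G$ witnessing some $y\in x\setminus L$ and apply the inductive hypothesis to $y$. The main obstacle is that a $\sigma$ furnished by the bare inductive hypothesis---giving only $\sigma(p)=p$ and $(\sigma\dot{y})_{G}\neq y$---need not move $x$: the displaced $(\sigma\dot{y})_{G}$ could still lie in $x$ as the interpretation of some other subname of $\dot{x}$, or $\sigma$ could merely shuffle subnames of $\dot{x}$ while preserving the set they interpret to. I would defeat this by strengthening the inductive conclusion so that $\sigma$ can additionally be chosen with $\sigma(q)\in G$ and $(\sigma\dot{y})_{G}\notin x$; then $(\sigma\dot{y})_{G}\in(\sigma\dot{x})_{G}\setminus x$, forcing $(\sigma\dot{x})_{G}\neq x$ and closing the step. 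The extra clauses are arranged in the translate-and-swap spirit of the proof of Lemma~\ref{Karagila}: compose the bare inductive $\sigma$ with a further $\omega$-based permutation supported above both the $\omega$-domain of $p$ and the countable $\omega$-support of $\dot{x}$, translating the image of $\dot{y}$ into a high, generic region that no subname of $\dot{x}$ can name. The $\aleph_{2}$-many coordinates of $\mathbb{Q}$ against the countable support of $\dot{x}$ supply exactly the room required---room not available when $x$ is a mere real, and the reason a naive adaptation of the warmup proof fails.
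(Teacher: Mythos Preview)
Your overall architecture is right and matches the paper's: generalize Karagila's lemma from subsets of $V$ to arbitrary hereditarily countable sets by an inductive argument on $\in$-structure, and your pigeonhole via $\omega_{2}$ columns is actually a pleasant simplification of the paper's Lemma~\ref{lem:rangeOutOfL}. But the inductive step---the heart of the matter---has a real gap, and it comes from conflating the two coordinate directions of $\mathbb{Q}=Add(\omega,\omega_{2})$.

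You invoke ``a further $\omega$-based permutation supported above both the $\omega$-domain of $p$ and the countable $\omega$-support of $\dot{x}$,'' and then say ``the $\aleph_{2}$-many coordinates of $\mathbb{Q}$ against the countable support of $\dot{x}$ supply exactly the room required.'' These are two different supports. The $\omega_{2}$-support of a nice name for a hereditarily countable set is indeed countable (ccc plus collection), and that is where the $\aleph_{2}$ room lives---but $\omega$-based automorphisms act on the $\omega$ coordinate and do not touch the $\omega_{2}$ direction at all, so that room is inaccessible to them. Conversely, the set of $n\in\omega$ mentioned by conditions in (the transitive closure of) $\dot{x}$ is typically all of $\omega$; there is no ``high region'' in the $\omega$ direction above it into which you can translate $\dot{y}$. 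And you cannot switch to $\omega_{2}$-based automorphisms, since those send $g_{\alpha}$ to a mutually generic column and destroy $E$. So as written there is no mechanism that delivers $(\sigma\dot{y})_{G}\notin x$.

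The paper resolves this by abandoning $Add(\omega,\omega_{2})$ and the $\omega$-based automorphisms altogether. It forces a single Cohen real, passes to the completion $\mathbb{B}=ro(Add(\omega,1))$, and uses the full automorphism group of $\mathbb{B}$ (every such $\sigma\in L$ preserves $E$ on the generic, since $L[\sigma G]=L[G]$). The inductive hypothesis is sharpened to the existence of a \emph{flexible name}: for each condition $b$ there are uncountably many automorphisms fixing $b$ whose images of $\dot{y}$ are pairwise \emph{forced} to be distinct below $b$. The successor step (Lemma~\ref{lem:Step}) is then a pigeonhole: if $\dot{x}$ fails to be flexible at some $b$, a maximal countable family $\langle\pi_{n}\dot{x}\rangle$ absorbs all automorphic images of $\dot{x}$; but uncountably many pairwise-distinct $\sigma\dot{y}$'s are forced into $\bigcup_{n}\pi_{n}\dot{x}$, a countable union of countable sets---contradiction. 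The base case (Lemma~\ref{lem:Base}) manufactures the required uncountable family by building a perfect tree of automorphisms of a carefully constructed countable atomless dense subalgebra of $\mathbb{B}$, lifted via Sikorski. This ``uncountably many, pairwise forced distinct'' strengthening is exactly the missing idea: it replaces your unavailable geometric ``room'' with a cardinality argument that the countability of $x$ cannot survive.
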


Before, we get to the business of proving this, I think it's worth
remarking that -- despite the comments above -- this claim feels
obviously true. Without powerset, Scott's clever trick is gone and
nothing looks likely to replace it. Moreover, I think it's obvious
that some generalization of the proof Theorem \ref{thm:SOA=0000ACelim}
should work. The devil, however, is in the details and, as the reader
will see, there are quite a few of them. With this in mind, I've aimed
to present the proof, so to speak, backwards. I'll start by isolating
a couple of crucial lemmas without proving them. Then I'll show how
Theorem \ref{thm:main} follows relatively quickly from them. The
reader who finds either of those lemma obvious is then welcome to
stop. For the less credulous reader, we'll then prove those lemmas
in the sections that follow. 

For the remainder of this paper we let $\mathbb{P}=Add(\omega,1)$
and $\mathbb{B}=ro(\mathbb{P})$, the complete Boolean algebra known
as the completion of $\mathbb{P}$, which is formed from the regular
open subsets of $\mathbb{P}$. When convenient we shall abuse this
notation and suppose that $\mathbb{P}$ is a dense subset of $\mathbb{B}$,
however, when we do this it will be clear from context or surrounding
remarks. For reals, $x,y\in\mathbb{R}$, we let $x\sim_{c}y$ mean
that $x$ and $y$ are \emph{mutually} \emph{constructible}; i.e.,
$L[x]=L[y]$.\footnote{In the context of the proof of Theorem \ref{thm:SOA=0000ACelim},
we denoted this relation as $E$.} For $x\in\mathbb{R}$, we let
\[
[x]_{c}=\{y\in\mathbb{R}\ |\ y\sim_{c}x\}
\]
and call this a \emph{constructibility degree}. We can now state our
crucial lemmas. 
\begin{lem}
Let $G$ be $\mathbb{P}$-generic over $L$. In $L[G]$, there is
no definable function $F:\mathbb{R}\to L$ such that $F(x)=F(y)$
iff $x\sim_{c}y$.\label{lem:rangeOutOfL}
\end{lem}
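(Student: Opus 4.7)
The plan is to generalize the Karagila-style automorphism argument from Theorem \ref{thm:SOA=0000ACelim} to this single-Cohen-real extension. Since $\mathbb{P}=Add(\omega,1)$ is isomorphic in $L$ to $Add(\omega,\omega)$, via a fixed bijection $\omega\cong\omega\times\omega$ we may view conditions as finite partial functions $p:\omega\times\omega\rightharpoondown 2$ and the generic as $g:\omega\times\omega\to 2$. Setting $g_{n}(m):=g(n,m)$ produces a sequence $\langle g_{n}\rangle_{n<\omega}$ of mutually Cohen-generic reals over $L$; in particular $L[g_{0}]\ne L[g_{n_{1}}]$ whenever $n_{1}\ne 0$, since $g_{n_{1}}$ is then Cohen-generic over $L[g_{0}]$. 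Every permutation $\pi$ of $\omega$ yields, just as in Section \ref{sec:A-warmup-theorem}, an $\omega$-based automorphism $\sigma_{\pi}\in L$ of $\mathbb{P}$ whose action on canonical names satisfies $(\sigma_{\pi}\dot{g}_{n})_{G}=g_{\pi(n)}$.

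Suppose toward contradiction that $F:\mathbb{R}\to L$ is definable in $L[G]$ and respects $\sim_{c}$; for presentation, grant that the defining formula draws its parameters from $L$. Set $a:=F(g_{0})\in L$ and fix $p_{0}\in G$ with $p_{0}\Vdash F(\dot{g}_{0})=\check{a}$. The crucial density step is that the set of $p\le p_{0}$ for which $p(0,\cdot)=p(n_{1},\cdot)$ as partial functions on $\omega$ for some $n_{1}>0$ is dense below $p_{0}$: given any $q\le p_{0}$, pick $n_{1}$ strictly larger than the (finite) first-coordinate support of $q$ and extend $q$ by copying its $0$-th slice to coordinate $n_{1}$. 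Choose such a $p\in G$ with witness $n_{1}$ and set $\sigma:=\sigma_{\pi}$ for the transposition $\pi=(0\;n_{1})$. By the symmetry of $p$ we have $\sigma(p)=p$, and applying $\sigma$ to $p\Vdash F(\dot{g}_{0})=\check{a}$ yields
\[
p=\sigma(p)\Vdash F(\sigma\dot{g}_{0})=\check{a},
\]
using that $\sigma$ fixes $\check{a}$. Interpreting in $G$ delivers $F(g_{n_{1}})=a=F(g_{0})$, hence $g_{0}\sim_{c}g_{n_{1}}$, contradicting the observation above.

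The main obstacle I expect is the parameter restriction just made. A general definable $F$ may invoke a parameter $\bar{b}\in L[G]\setminus L$, and then $\sigma_{\pi}$ typically moves every name $\dot{\bar{b}}$ for $\bar{b}$, so the shifted forcing relation no longer talks about the same function $F$. Dealing with this is, I believe, the content of the promised generalization of Karagila's Lemma \ref{Karagila}: one needs a density/symmetry argument producing an $\omega$-based $\sigma\in L$ that fixes both $p$ and a chosen name $\dot{\bar{b}}$ while still swapping two indices $i,j$ for which the corresponding slices $\dot{g}_{i},\dot{g}_{j}$ have interpretations in distinct constructibility degrees. Since $\mathbb{P}$ is c.c.c., $\dot{\bar{b}}$ is supported by countably many antichains whose combined first-coordinate support can be extracted in $L$, and one may try to restrict the swap to a pair of coordinates lying outside that support. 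Making this precise — and verifying that after the swap the exchanged reals still witness distinct constructibility degrees — is where the delicate bookkeeping lies.
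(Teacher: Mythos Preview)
Your core argument (the first two paragraphs) is correct, and your worry in the third is unnecessary. In this lemma ``definable'' means parameter-free definable: that is what elimination of imaginaries requires (the $F$ in Definition~1 is just a formula), and it is all the paper's own proof establishes. Indeed, the paper derives the lemma from Corollary~\ref{cor:NoDefWOofConsDegs}---no definable well-ordering of the constructibility degrees in $L[G]$---and that corollary is plainly false once the parameter $g$ is allowed, since $L[G]=L[g]$ carries a global well-order definable from $g$. So your ``for presentation'' restriction to $L$-parameters is already the full statement, and the sketched workaround for $L[G]$-parameters is not needed.

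The route, however, differs from the paper's. The paper first proves (Proposition~\ref{prop:ConDegDef}) that only $0_c$ and $g_c$ are parameter-free definable degrees, by swapping the two halves $G_0,G_1$ of the generic; it then observes that a definable $F:\mathbb{R}\to L$ respecting $\sim_c$ would pull back $<_L$ to a definable well-ordering of the degrees, which is impossible since there are more than two of them. You instead exploit the target constraint $F(g_0)\in L$ directly: because $\check a$ is fixed by every automorphism, a single column-swap fixing one condition already forces $F(g_0)=F(g_{n_1})$, contradicting $g_0\not\sim_c g_{n_1}$. Your argument is shorter and bypasses the well-ordering detour; the paper's approach buys the standalone fact about which degrees are definable, which has some independent interest.
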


In the main proof, we'll work in some $L[G]$ and use this to show
that a putative function $F$ that respects $\sim_{c}$ will need
to send some real outside $L$. In the proof of Theorem \ref{thm:SOA=0000ACelim},
we were able to achieve this a little more easily since we forced
over $L$ using $Add(\omega,\omega_{2})$ and so added more reals
than there were sets in $L_{\omega_{1}}$. 
\begin{lem}
Let $G$ be $\mathbb{B}$-generic over $V$. If $x\in H(\omega_{1})^{V[G]}\backslash V$,
then there is some $\dot{x}\in V^{\mathbb{B}}$ with $\dot{x}_{G}=x$
such that for all $b\in\mathbb{B}$ there is some $\sigma\in Aut(\mathbb{B})^{V}$
such that $\sigma(b)=b$ and $b\Vdash\sigma\dot{x}\neq\dot{x}$.\label{lem:MovingNames}
\end{lem}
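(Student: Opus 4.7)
The plan is to lift the shifting argument of Lemma~\ref{Karagila} to the completion $\mathbb{B}$ by carrying it out inside the relative algebra $\mathbb{B}\upharpoonright b$ for each prescribed $b$, using three ingredients: a transitive-closure reduction of $x$ to a subset of $V$; the self-similarity $\mathbb{B}\upharpoonright b\cong\mathbb{B}$ as countable-density atomless complete Boolean algebras; and a name-construction that propagates movement at a lower layer up to $\dot{x}$.

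First I would reduce to a tractable layer of $x$. Since $x\in H(\omega_{1})^{V[G]}\setminus V$ and $\mathrm{trcl}(x)\cup\{x\}$ is countable, I pick a least-rank $y\in\mathrm{trcl}(x)\cup\{x\}$ with $y\notin V$, so $y\subseteq V$ (every member of $y$ is of strictly smaller rank and therefore in $V$). I then build the $\mathbb{B}$-name $\dot{x}$ by recursion on rank, using canonical $V$-names for the members of $\mathrm{trcl}(x)\cap V$ and a carefully tagged name $\dot{y}$ for $y$; the tagging (for instance via canonical rank annotations, or pairwise disjoint Boolean-value decompositions at the $y$-layer) must ensure that any automorphism moving $\dot{y}$ forces a corresponding move of $\dot{x}$.

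Next, for a given $b\in\mathbb{B}\setminus\{0\}$, I would fix in $V$ a Boolean isomorphism $\phi_{b}:\mathbb{B}\to\mathbb{B}\upharpoonright b$ (available because both sides are complete atomless Boolean algebras of countable density, hence isomorphic by the classical characterization) and use it to transport the whole set-up into the relative algebra. Concretely, the image of $\dot{y}$ under $\phi_{b}$ is a $(\mathbb{B}\upharpoonright b)$-name whose value, in every generic extension extending $b$, is still a subset of $V$ outside $V$. I would then apply the $Add(\omega,1)$-version of Lemma~\ref{Karagila} --- the column-shifts of $Add(\omega,\omega_{2})$ being replaced by permutations of $\omega$ that move finite supports to unused high coordinates, with the rest of Karagila's argument going through unchanged --- inside the copy of $\mathbb{B}$ sitting below $b$ via $\phi_{b}$. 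This yields an automorphism $\tau\in Aut(\mathbb{B})^{V}$ which shifts $\dot{y}$ off itself; conjugating $\tau$ back through $\phi_{b}$ and extending by the identity on $\mathbb{B}\upharpoonright\neg b$ gives $\sigma\in Aut(\mathbb{B})^{V}$ with $\sigma(b)=b$, and by the tagged construction of $\dot{x}$, $b\Vdash\sigma\dot{x}\neq\dot{x}$.

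The main obstacle is upgrading Karagila's pointwise conclusion $(\tau\dot{y})_{H}\neq\dot{y}_{H}$ to the uniform forcing statement $b\Vdash\sigma\dot{x}\neq\dot{x}$, since Karagila's $\tau$ is assembled from generic-dependent conditions in the specific $G$ rather than produced uniformly below $b$. I anticipate two complementary remedies: a density argument below $b$ that, for each $q\leq b$, produces a local shift forcing disagreement and then amalgamates the family using the completeness and homogeneity of $\mathbb{B}\upharpoonright b$ into a single automorphism; and a strengthening of the name $\dot{x}$ so that the Karagila construction admits a canonical, $V$-definable choice of shift (e.g.\ from a fixed wellordering of candidate shifts in $V$). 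A secondary subtlety is designing the tag-structure at the $y$-layer so that replacing $\dot{y}$ by a distinct-but-possibly-isomorphic candidate genuinely alters $\dot{x}_{H}$ rather than merely reshuffling an internal labeling of $x$.
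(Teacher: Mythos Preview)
Your proposal has the right architecture --- reduce to a minimal-rank $y\subseteq V$ with $y\notin V$, move $\dot y$ by an automorphism fixing $b$, and propagate the movement up to $\dot x$ --- but the step you label a ``secondary subtlety'' is in fact the central obstacle, and neither of your suggested tags resolves it. Suppose $y\in z\in x$ (or even just $y\in x$). An automorphism $\sigma$ with $b\Vdash\sigma\dot y\neq\dot y$ may perfectly well have $(\sigma\dot y)_G$ equal to some \emph{other} element of $z$ that is also new (siblings of $y$ at the same rank need not lie in $V$), and simultaneously $\sigma$ may permute the remaining members of $z$ so that $(\sigma\dot z)_G=z$; rank annotations cannot help because $\sigma\dot y$ is forced to have the same rank as $\dot y$, and Boolean-value decompositions act on the \emph{name} rather than on the set it denotes. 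The paper confronts this directly: it strengthens the inductive hypothesis to the existence of a \emph{flexible} name for $y$, meaning that for every $b$ there is an \emph{uncountable} family $\mathcal A_b\subseteq\mathrm{Aut}(\mathbb B)^V$ with each $\sigma\in\mathcal A_b$ fixing $b$ and $b\Vdash\sigma\dot y\neq\tau\dot y$ for all $\sigma\neq\tau\in\mathcal A_b$. The propagation to $\dot x$ is then a pigeonhole argument: if $\dot x$ failed to be flexible at $b$, there would be only countably many ``targets'' $\pi_n\dot x$, but each of the uncountably many $\sigma\dot y$ is forced into one of them, and since $b\Vdash|\pi_n\dot x|\leq\omega$ this packs uncountably many pairwise-forced-distinct names into a countable set. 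A single automorphism, however cleverly your $\dot x$ is tagged, does not support this counting.

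Your ``main obstacle'' is also real and not repaired by the sketched remedies. Karagila's lemma produces $\sigma$ from data in a \emph{specific} generic $G$ (namely the condition $r^*\in G$), so the conclusion is only $(\sigma\dot y)_G\neq y$, not $b\Vdash\sigma\dot y\neq\dot y$; a density-below-$b$ assembly of such local $\sigma$'s does not yield a single automorphism, and a $V$-definable canonical choice of shift has nothing canonical to choose from without already having the forced inequality. The paper's base case abandons Karagila's translation idea entirely: it builds a countable atomless subalgebra $\mathbb C$ dense in $\mathbb B$, freely generated by a sequence $\{b_n\}$ designed so that any two distinct patterns of flips on the even generators force disagreement about $\dot y$, then lifts these $2^\omega$ automorphisms of $\mathbb C$ to $\mathbb B$. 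This simultaneously delivers the forced (not merely pointwise) inequality and the uncountable family that the induction step requires.
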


This lemma will play a similar role to Karagila's Lemma \ref{Karagila}
in the proof below. We'll use it to establish that a putative function
$F$ that respects $\sim_{c}$ will have an output that is moved by
an automorphism for mutually constructible inputs. This will give
us the contradiction we seek. Here then is the main proof. 
\begin{proof}
(of Theorem \ref{thm:main}) It will suffice to show that there is
a model $N$ of $ZFC_{count}^{-}$ with a definable equivalence relation
for which there is no definable function that respects it. The model
will be obtained by taking the hereditarily countable sets of a generic
extension of $L$ by a Cohen real. More specifically, we let $G$
be $\mathbb{B}$-generic over $L$ and let $N=H(\omega_{1})^{L[G]}$.
We then let our equivalence relation be mutual constructibility, $\sim_{c}$. 

Now working in $N$, we suppose toward a contradiction that $F:\mathbb{R}\to N$
is a function definable over $N$ which is such that for all $x,y\in\mathbb{R}$
\[
x\sim_{c}y\ \Leftrightarrow\ F(x)=F(y).
\]
Lemma \ref{lem:rangeOutOfL}, tells us that we may fix some $d\in\mathbb{R}$
such that $F(d)=x$ for some $x\in N\backslash L$. Then either $d\in L$
or $d\notin L$. If $d\notin L$, then either $L[d]=L[G]$ or $L[d]\subsetneq L[G]$.
To get things moving, let's start by addressing the first of these
as it is most similar to the proof of Theorem \ref{thm:SOA=0000ACelim}
above. After that, we'll complete our case analysis and work through
the rest of the cases. So suppose $L[d]=L[G]$ and let $\dot{d}$
be a $\mathbb{B}$-name such that $\dot{d}_{G}=g$. Since $L[d]=L[G]$,
we may then fix some $b_{0}\in G$ such that $b_{0}\Vdash\dot{d}\sim_{c}\dot{G}$,
where $\dot{G}$ is the canonical name for $G$. Then using Lemma
\ref{lem:MovingNames}, we fix a $\mathbb{B}$-name $\dot{x}$ such
that $\dot{x}_{G}=x$ and for all $b\in\mathbb{B}$ there is some
$\sigma:\mathbb{B}\cong\mathbb{B}$ with $\sigma\in L$ such that
$\sigma(b)=b$ and $b\Vdash\sigma\dot{x}\neq\dot{x}$. We then see
that
\[
L[G]\models F(d)=x\Leftrightarrow\exists b_{1}\in G\ \Vdash F(\dot{d})=\dot{x}.
\]
After fixing such a $b_{1}$, we may then obtain $b\in G$ with $b\leq b_{0},b_{1}$.
Then we may fix an automorphism $\sigma$ of $\mathbb{B}$ that witnesses
the proper described above. We then see that 
\[
b=\sigma(b)\Vdash F(\sigma\dot{d},\sigma\dot{x})\wedge\sigma\dot{d}\sim_{c}\sigma\dot{G}.
\]
Now let $d^{*}=(\sigma\dot{d})_{G_{d}}$ and $x^{*}=(\sigma\dot{x})_{G}$.
Then we $d^{*}\sim_{C}(\sigma\dot{G})_{G}\sim_{c}G\sim_{c}d$. But
since $b\Vdash\sigma\dot{x}\neq\dot{x}$, we also see that $x^{*}\neq x$
and so $F$ does not respect $\sim_{c}$. This completes our warmup
case.

For the rest of our case analysis, note that if $L[d]\subsetneq L[G]$,
it can be seen that there exist $G_{d}$ and $G_{e}$ that are mutually
$\mathbb{B}$-generic over $L$ such that $L[G_{d}]=L[d]$ and $L[G]=L[G_{d}][G_{e}]$.
This leaves us with three more cases to consider. Either: $d\notin L$
and $x\in L[G_{d}]$; $d\notin L$ and $x\in L[G]\backslash L[G_{d}]$;
or $d\in L$. 

Suppose first that $d\notin L$ and $x\in L[G_{d}]$. The proof here
is similar to our warmup case. First we fix $b_{0}\in G_{d}$ such
that $b_{0}\Vdash\dot{d}\sim_{c}\dot{G}_{d}$. Then using Lemma \ref{lem:MovingNames},
fix a $\mathbb{B}$-name $\dot{x}$ such that $\dot{x}_{G_{d}}=x$
and for all $b\in\mathbb{B}$ there is an automorphism $\sigma$ in
$L$ that fixes $b$ while $b$ forces $\sigma\dot{x}\neq\dot{x}$.
Then it can be seen that using the homogeneity properties of $\mathbb{B}$
that 
\begin{align*}
L[G]\models F(d)=x & \Leftrightarrow L[G_{d}]\models``\Vdash F(d)=x"\\
 & \Leftrightarrow\exists b_{1}\in G_{d}\ b\Vdash\Phi(\dot{d},\dot{x})
\end{align*}
where $\Phi(d,x)$ says that $\Vdash F(d)=x$ and $\dot{d}$ is the
canonical name for the Cohen real delivered by $G_{d}$. Having fixed
such a $b_{1}\in G$, we may then obtain $b\in G$ with $b\leq b_{0},b_{1}$.
Then using our special property of $\dot{x}$, we fix an automorphism
$\sigma$ of $\mathbb{B}$ from $L$ such that $\sigma(b)=b$ and
$b\Vdash\sigma\dot{x}\neq\dot{x}$. We then see that 
\[
b=\sigma(b)\Vdash\Phi(\sigma\dot{d},\sigma\dot{x})\wedge\sigma\dot{d}\sim_{c}\sigma\dot{G}_{d}.
\]
Let $d^{*}=(\sigma\dot{d})_{G_{d}}$ and $x^{*}=(\sigma\dot{x})_{G_{d}}$.
Then we see that $L[G]\models F(d^{*})=x^{*}$. Moreover, we see that
$L[d]=L[d^{*}]$ and so $d\sim_{c}d^{*}$. However, since $b\Vdash\sigma\dot{x}\neq\dot{x}$,
we also see that $x^{*}\neq x$ which means that $F$ doesn't respect
$\sim_{c}$. 

Next let us suppose that $d\notin L$ and $x\in L[G]\backslash L[G_{d}]$.
Note that $L[G]$ is a $\mathbb{B}^{*}$-generic extension of $L[G_{d}]$
by $G_{e}$ where $\mathbb{B}^{*}$ is $L[G_{d}]$'s version of $\mathbb{B}=ro(\mathbb{P})$.
Then, using Lemma \ref{lem:MovingNames} -- this time, relative to
$L[G_{d}]$ and $\mathbb{B}^{*}$ rather than $L$ and $\mathbb{B}$
-- we may fix a $\mathbb{B}^{*}$-name $\dot{x}\in L[G_{d}]^{\mathbb{B}^{*}}$
such that $\dot{x}_{G_{e}}=x$ and for all $b\in\mathbb{B}^{*}$ there
is an automorphism $\sigma\in L[G_{d}]$ of $\mathbb{B}^{*}$ that
fixes $b$ while $b\Vdash\sigma\dot{x}\neq\dot{x}$. Then we see that:
\begin{align*}
L[G]\models F(d)=x & \Leftrightarrow L[G_{d}][G_{e}]\models F(d)=x\\
 & \Leftrightarrow\exists b\in G_{e}\ (b\Vdash F(d)=\dot{x})^{L[G_{d}]}.
\end{align*}
Fixing such a $b$ we may then obtain $\sigma\in L[G_{d}]$ such that
$\sigma(b)=b$ and $b\Vdash\sigma\dot{x}\neq\dot{x}$. Then we see
in that in $L[G_{d}]$ we have
\[
b=\sigma(b)\Vdash F(d)=\sigma\dot{x}
\]
and so if we let $x^{*}=(\sigma\dot{x})_{G_{e}}$ we see that $L[G]\models F(d)=x^{*}$,
which is a contradiction since $x^{*}\neq x$. Finally, if we suppose
that $d\in L$, we see that $x\in L[G]\backslash L$ and essentially
the same argument as used in the previous case suffices.
\end{proof}
Thus, we have a relatively natural generalization of the proof of
Theorem \ref{thm:SOA=0000ACelim}. Note that it also follows that
$ZFC^{-}$ cannot eliminate imaginaries either. We are now ready to
prove the lemmas. \ref{lem:Base-1}Moreover, the reader will observe
that the proofs below can also be adapted to show that $ZF^{-}$ plus
the statement that there is an inaccessible cardinal into which every
set can be injected also fails to eliminate imaginaries.\footnote{We discuss the required modifications at the end of the paper.}

\subsection{Lemma \ref{lem:rangeOutOfL}\label{sec:OutofL}}

Here our goal is to show that in a generic extension of $L$ by a
Cohen real any function respecting mutual constructibility will need
to send a real to a set outside $L$. To achieve this, we note that
$L$ is definably well-ordered and then make an automorphism argument
showing that the constructibility degrees in the Cohen extension lack
a definable well-ordering. First, suppose that $G$ is $\mathbb{P}$-generic
over $L$. Then let $0_{c}=\mathbb{R}^{L}=[x]_{c}$ for any $x\in\mathbb{R}\cap L$,
and let $g_{c}=[\bigcup G]_{c}$. The following claim is well-known
but we give a quick proof for self-containment. 
\begin{prop}
(Essentially, \citealp{AbrahamShoreCohenReals}) In $L[G]$, no construcibility
degree other than $0_{c}$ and $g_{c}$ are definable.\label{prop:ConDegDef}
\end{prop}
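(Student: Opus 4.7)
The plan is to use an automorphism argument exploiting the homogeneity of Cohen forcing. Suppose toward a contradiction that a formula $\varphi(y)$ defines a constructibility degree $D$ in $L[G]$ with $D \neq 0_c$ and $D \neq g_c$. Fix $y \in D$: since $D \neq 0_c$, $y \notin L$, and since $D \neq g_c$, the inner model $L[y]$ satisfies $L \subsetneq L[y] \subsetneq L[G]$. Choose a name $\dot y \in L^{\mathbb{P}}$ with $\dot y_G = y$ and a condition $p \in G$ with $p \Vdash \varphi(\dot y)$.

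The key observation is that, since $\varphi$ involves no parameters from $L[G]\setminus L$, any $\sigma \in Aut(\mathbb{P})^L$ fixing $p$ preserves the forced formula: applying $\sigma$ to both sides of $p \Vdash \varphi(\dot y)$ yields $p = \sigma(p) \Vdash \varphi(\sigma \dot y)$, so $(\sigma \dot y)_G \in D = [y]_c$ and therefore $L[(\sigma\dot y)_G] = L[y]$. In particular, $(\sigma\dot y)_G \in L[y]$ for every such $\sigma$. The plan is then to exhibit a $\sigma$ in the stabilizer of $p$ inside $Aut(\mathbb{P})^L$ for which $(\sigma\dot y)_G \notin L[y]$, giving the desired contradiction.

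To construct such a $\sigma$, I would use that since $y \notin L$, the name $\dot y$ must consult some coordinate $m \in \omega \setminus dom(p)$ essentially; and since $L[y] \subsetneq L[G]$, standard results on Cohen extensions (Grigorieff's theorem on intermediate models via complete subalgebras $\mathbb{B}_0 \subsetneq \mathbb{B}$) supply a coordinate $n \in \omega \setminus dom(p)$ whose value $g(n)$ is Cohen-generic over $L[y]$. A permutation $\pi$ of $\omega$ fixing $dom(p)$ pointwise and moving $m$ to $n$ — amplified by further coordinate exchanges, as in the model case where $y = g$ restricted to the evens and one swaps an entire tail of evens with the corresponding tail of odds — induces an automorphism $\sigma_\pi$ under which $(\sigma_\pi \dot y)_G$ encodes information about $g(n)$, and this information is by construction not available in $L[y]$.

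The main obstacle is making precise the claim that $(\sigma_\pi \dot y)_G$ actually falls outside $L[y]$ for a general name $\dot y$; a direct bit-by-bit argument is transparent in the toy case, but the general case needs careful bookkeeping with the complete subalgebra $\mathbb{B}_0$ corresponding to $L[y]$ and a careful choice of nice name for $y$. A softer alternative I would fall back on is to note that parameter-free definability of $D$ makes $L[y]$ an ordinal-definable intermediate inner model of $L[G]$, and then to invoke the Abraham--Shore-style fact that in a single-Cohen-real extension the only ordinal-definable intermediate inner models are $L$ and $L[G]$ themselves, which directly forces $L[y] \in \{L, L[G]\}$ and hence $D \in \{0_c, g_c\}$.
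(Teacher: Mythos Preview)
Your overall framework is right --- an automorphism of the forcing that fixes a condition but moves the relevant name to a different constructibility degree --- but the execution has the gap you yourself flag, and the paper closes it by a change of viewpoint that you nearly reach but do not carry through.

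Rather than fixing an arbitrary name $\dot y$ for an element of the putative degree $D$ and then hunting for a coordinate permutation $\sigma$ with $(\sigma\dot y)_G \notin L[y]$ --- which, as you note, is hard to arrange for a general name, since the effect of a permutation on $\dot y_G$ depends delicately on how $\dot y$ is built --- the paper first passes to a product picture. It regards $\mathbb{P}$ as $\mathbb{P}\times\mathbb{P}$ and $G$ as $G_0\times G_1$, so that $g_0,g_1$ are mutually $\mathbb{P}$-generic over $L$ and in particular $g_0\not\sim_c g_1$. One then works with the \emph{canonical} name $\dot g_0$ rather than with an arbitrary $\dot y$. If $\varphi$ defines $[g_0]_c$, pick $\langle p_0,p_1\rangle\in G_0\times G_1$ forcing $\varphi([\dot g_0]_c)$, and apply the automorphism of $\mathbb{P}\times\mathbb{P}$ that swaps the two factors together with a finite bit-flip ensuring $\langle p_0,p_1\rangle$ lies in the image generic. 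This sends $\dot g_0$ to a name whose value is a finite variant of $g_1$, so $\varphi$ also holds of $[g_1]_c$, contradicting that $\varphi$ singles out one degree. Your ``swap an entire tail of evens with the corresponding tail of odds'' is exactly this swap-of-factors automorphism; the step you are missing is that, once the product decomposition is in hand (your Grigorieff citation supplies it for an arbitrary intermediate $L[y]$), you should discard the arbitrary $\dot y$ in favour of the canonical $\dot g_0$: since $y\sim_c g_0$, the formula $\varphi$ holds of $g_0$ just as well, and the canonical name behaves transparently under the factor swap.

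Your fallback --- invoking the Abraham--Shore classification of ordinal-definable intermediate models of a Cohen extension --- is not a proof here: the proposition is itself attributed to Abraham and Shore, so this amounts to citing the very result under discussion.
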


\begin{proof}
Let us construe $G$ as $G_{0}\times G_{1}$. Then $G_{0}$ and $G_{1}$are
both $\mathbb{P}$-generic over $L$. Let $g_{0},g_{1}$ be $\bigcup G_{0}$
and $\bigcup G_{1}$ respectively. Note that $g_{0}\nsim_{c}g_{1}$
since they are mutually generic. Let $\dot{g}_{0}$ be the canonical
name for $g_{0}$. Now suppose toward a contradiction that $[g_{0}]_{c}$
is definable in $L[G_{0}\times G_{1}]$. Then we may fix a formula
$\varphi(x)$ and $\langle p_{0},p_{1}\rangle\in G_{0}\times G_{1}$
such that 
\[
\langle p_{0},p_{1}\rangle\Vdash\varphi([\dot{g}_{0}]_{c}).
\]
Let $H_{0}\times H_{1}$ formed from $G_{0}\times G_{1}$ letting:
$H_{0}$ be $G_{1}$ except that it agrees with $p_{0}$ on $dom(p_{0})$;
and similarly for $H_{1}$. Essentially, $H_{0}\times H_{1}$ is obtained
by an automorphism of $\mathbb{P}\times\mathbb{P}$ that swaps $G_{0}$
with $G_{1}$ and does some bit-flipping to ensure that $p_{0}\in H_{0}$
and $p_{1}\in H_{1}$. From this we then see that $\langle p_{0},p_{1}\rangle\in H_{0}\times H_{1}$
and so 
\[
L[H_{0}\times H_{1}]\models\varphi([(\dot{g}_{0})_{H_{0}\times H_{1}}]_{c}).
\]
But since $(\dot{g}_{0})_{H_{0}\times H_{1}}$ is a finite variation
of $g_{1}$, we see that $(\dot{g}_{0})_{H_{0}\times H_{1}}\sim_{c}g_{1}$
and since $L[H_{0}\times H_{1}]=L[G_{0}\times G_{1}]$ we see that
\[
L[G_{0}\times G_{1}]\models\varphi([g_{1}]_{c})
\]
and so $\varphi$ does not define $[g_{0}]_{c}$. 
\end{proof}
From this, we easily see that: 
\begin{cor}
In $L[G]$, there is no definable well ordering of its constructibility
degrees.\label{cor:NoDefWOofConsDegs}
\end{cor}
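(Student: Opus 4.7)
The plan is a short deduction from Proposition~\ref{prop:ConDegDef}. Suppose toward contradiction that some formula $\varphi(x,y)$ defines over $L[G]$ a well-ordering $<_W$ of the constructibility degrees. For each $n\in\omega$, the formula saying ``$[x]_c$ has precisely $n$ many $<_W$-predecessors'' picks out at most one constructibility degree, and picks out exactly one whenever the field of $<_W$ has at least $n+1$ elements. Call this degree $e_n$ when it exists. Since every natural number is definable without parameters, each $e_n$ is a constructibility degree definable without parameters in $L[G]$, and so by Proposition~\ref{prop:ConDegDef} we have $e_n\in\{0_c,g_c\}$.

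The contradiction will come from showing that at least three of the $e_n$ exist and are pairwise distinct. For this I only need to exhibit three distinct constructibility degrees in $L[G]$, and this is immediate from the mutual-genericity trick already in the proof of Proposition~\ref{prop:ConDegDef}. Writing $G=G_0\times G_1$ as a product of two mutually $\mathbb{P}$-generic filters over $L$ (for instance by separating the even and odd coordinates of $\bigcup G$), and setting $g_0=\bigcup G_0$ and $g_1=\bigcup G_1$, the three degrees $0_c$, $[g_0]_c$ and $g_c$ are pairwise distinct: $g_0\notin L$ gives $[g_0]_c\neq 0_c$, while $g_1\in L[G]\setminus L[g_0]$ forces $L[g_0]\subsetneq L[G]$ and hence $[g_0]_c\neq g_c$. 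So the field of $<_W$ has at least three elements, whence $e_0,e_1,e_2$ all exist and are distinct—yet all three must lie in the two-element set $\{0_c,g_c\}$, which is absurd.

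There is no serious obstacle here since all the real content has already been packaged into Proposition~\ref{prop:ConDegDef}. The only minor point to be careful about is what ``definable'' means for the well-ordering: the argument above only uses natural-number parameters, which are themselves definable without parameters, so the conclusion is robust regardless of whether one permits further parameters, and this is consistent with the proposition since its underlying automorphism argument remains valid in the presence of $L$-parameters (the relevant automorphisms of $\mathbb{P}\times\mathbb{P}$ lie in $L$ and so fix every member of $L$).
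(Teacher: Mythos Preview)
Your argument is correct and is essentially what the paper has in mind: the paper gives no explicit proof of this corollary, writing only ``From this, we easily see that'' after Proposition~\ref{prop:ConDegDef}. Your spelling-out---that a definable well-ordering would make the $n$th degree definable for each $n$, while the product decomposition $G=G_0\times G_1$ already used in the proposition's proof supplies at least three distinct degrees---is exactly the kind of routine deduction the paper is gesturing at, and your care about parameters is appropriate and matches the scope of the automorphism argument.
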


And from here Lemma \ref{lem:rangeOutOfL} then follows quickly. To
see this work in $L[G]$ and note that if there were a definable function
$F:\mathbb{R}\to L$ such that $F(x)=F(y)$ iff $x\sim_{c}y$, then
we could use it to define a well-ordering of the constructibility
degrees by letting $[x]_{c}\triangleleft[y]_{c}$ iff $F(x)<_{L}F(y)$,
contradicting the corollary above. Thus, we see that for any definable
function $F:\mathbb{R}\to L[G]$ with $F(x)=F(y)$ iff $x\sim_{c}y$
there must be some $x\in\mathbb{R}$ such that $F(x)\in L[G]\backslash L$. 

\subsection{Lemma \ref{lem:MovingNames}\label{sec:MovingNames}}

Our goal now is to prove Lemma \ref{lem:MovingNames}. We want to
find names for elements of $\mathbb{HC}$ in a generic extension that
are sufficiently \emph{flexible} that they are easily moved by automorphisms.
These will then be the outputs of the putative function in the proof
by contradiction above. In the basic case of Lemma \ref{Karagila},
we were just dealing with names of subsets of the ground model and
we were able to make do with just one automorphism that moved the
denotation of our name. However, we now have more iterative $\in$-structure
to deal with and this will no longer do. Moreover, our argument below
seems to require that the names be forced to be moved rather than
merely being shifted in a particular generic extension. Addressing
these issues has led to the following definition. In this section,
we let $G$ be $\mathbb{B}$-generic over $V$. 
\begin{defn}
Say that $y\in V[G]$ has a \emph{flexible} \emph{name} $\dot{y}\in V^{\mathbb{B}}$
if $\dot{y}_{G}=y$ and for all $p\in\mathbb{B}$ there is some $\mathcal{A}_{p}\subseteq Aut(\mathbb{B})^{V}$
with $|\mathcal{A}|>\omega$ such that for all $\sigma\neq\tau\in\mathcal{A}_{p}$: 
\begin{itemize}
\item $\sigma(p)=p$; 
\item $p\Vdash\sigma\dot{y}\neq\tau\dot{y}$.
\end{itemize}
All the automorphisms considered below will be of order 2, so we can
largely ignore annoying problems around inverses and thus, keep our
notation relatively clean. Our goal then is to prove the following: 
\end{defn}

\begin{thm}
Every $x\in H(\omega_{1})^{V[G]}\backslash V$ has a flexible name.\label{thm:FlexibleNames}
\end{thm}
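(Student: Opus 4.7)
The plan is to prove Theorem~\ref{thm:FlexibleNames} by transfinite induction on the rank of $x \in H(\omega_{1})^{V[G]}\setminus V$, generalising Karagila's Lemma~\ref{Karagila} to deliver an \emph{uncountable} family of automorphisms pairwise \emph{forced} below any $p$ to move $\dot{x}$, rather than a single automorphism that happens to shift the denotation in the fixed generic.

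The setup is to identify $\mathbb{B} = ro(Add(\omega,1))$ with $ro(Add(\omega,\omega))$, which is legitimate since both are completions of countable atomless forcings. In this presentation every $S \subseteq \omega\times\omega$ gives an involutive bit-flipping automorphism $\pi_{S} \in Aut(\mathbb{B})^{V}$ that fixes any $p$ with $dom(p)\cap S = \emptyset$ and satisfies $\pi_{S}\pi_{T} = \pi_{S\triangle T}$. The key Boolean computation is that, for the canonical generic-function name $\dot{c}$, $\llbracket \pi_{S}\dot{c} = \pi_{T}\dot{c}\rrbracket = 0$ whenever $S \neq T$, since the two names are forced to disagree on every coordinate in $S\triangle T$. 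Hence $\dot{c}$ is itself flexible, witnessed by $\mathcal{A}_{p} = \{\pi_{S} : S \subseteq (\omega\times\omega)\setminus dom(p)\}$.

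For the base case, when $x \in V[G]\setminus V$ is a real, I would transfer the flexibility of $\dot{c}$ to $x$. If $V[x] = V[G]$ there is a $V$-definable bijection $\Phi$ with $\Phi(c) = x$, and the name $\dot{x} = \check{\Phi}(\dot{c})$ inherits flexibility directly from $\dot{c}$ via the computation just given. More generally, the intermediate model theorem for Cohen forcing gives $V[x] = V[c_{x}]$ for some Cohen real $c_{x}$ associated to a complete subalgebra $\mathbb{B}_{x}\leq\mathbb{B}$, and bit-flippings of $\mathbb{B}_{x}$ extend to $\mathbb{B}$ by acting trivially on a complementary Cohen factor, yielding the required uncountable family again pairwise forced to move $\dot{x}$ below $p$.

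For the inductive step, write $x = \{y_{n}\}_{n<\omega}$. When every $y_{n}\in V$, the set $x\subseteq V$ is ``new'' and a variant of Karagila's shifting argument --- using bit-flippings on infinitely many fresh coordinates around a witness $u\in V$ whose membership in $\dot{x}$ is not decided by $p$ --- delivers the uncountable family directly. When some $y_{n_{0}}\notin V$, induction supplies a flexible $\dot{y}_{n_{0}}$ with uncountable $\mathcal{A}_{p}^{y_{n_{0}}}$; build $\dot{x}$ using $\dot{y}_{n_{0}}$ together with flexible-or-check names for the remaining $y_{n}$, and define $\mathcal{A}_{p}^{x}$ as the sub-family of $\sigma \in \mathcal{A}_{p}^{y_{n_{0}}}$ satisfying $p \Vdash \sigma\dot{y}_{n_{0}}\notin\{y_{n} : n\neq n_{0}\}$. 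The main obstacle is verifying that this pruning eliminates only countably many $\sigma$, so that $\mathcal{A}_{p}^{x}$ remains uncountable: for each fixed $n$, the ``bad'' $\sigma$ with $p \wedge \llbracket \sigma\dot{y}_{n_{0}} = y_{n}\rrbracket > 0$ correspond to a pairwise-incompatible antichain of witnessing conditions below $p$ (a common refinement of two such conditions would force $\sigma\dot{y}_{n_{0}} = y_{n} = \tau\dot{y}_{n_{0}}$, contradicting the pairwise forced distinctness inherited from $\mathcal{A}_{p}^{y_{n_{0}}}$), and so is countable by the countable chain condition on $\mathbb{B}$. Summing over the countably many $n$ leaves an uncountable surviving family, and a short Boolean-value computation confirms that any two survivors are forced to move $\dot{x}$ as a whole.
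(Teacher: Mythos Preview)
Your inductive decomposition matches the paper's exactly: a base case for $x\subseteq V$ with $x\notin V$, and a successor step using a flexible name for some $y\in x$. But the execution of both halves has genuine gaps.

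\textbf{Induction step.} Your pruning only secures $p\Vdash\sigma\dot{y}_{n_0}\neq\dot{y}_n$ for each $n\neq n_0$, whereas to conclude $p\Vdash\sigma\dot{x}\neq\tau\dot{x}$ you need $p\Vdash\sigma\dot{y}_{n_0}\neq\tau\dot{y}_n$, and $\tau\dot{y}_n$ need not equal $\dot{y}_n$ once $y_n\notin V$. Your ccc antichain bound controls countably many bad $\sigma$ per pair $(n,\text{target name})$, but the relevant targets are the $\tau\dot{y}_n$ for \emph{uncountably} many $\tau$, so the bound evaporates and the ``short Boolean-value computation'' does not close. The paper sidesteps this entirely: it argues by contradiction, assuming some $b$ admits only a countable maximal family $\langle\pi_n\rangle$, then uses the uncountable flexibility family for $\dot{y}$ together with pigeonhole on the countable dense set $\mathbb{P}$ to force uncountably many pairwise-distinct $\sigma\dot{y}$ into the forced-countable set $\bigcup_n\pi_n\dot{x}$. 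No direct comparison of $\sigma\dot{y}_{n_0}$ with $\tau\dot{y}_n$ is ever required.

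\textbf{Base case.} Bit-flipping automorphisms $\pi_S$ of $Add(\omega,\omega)$ fix any name that does not mention coordinates in $S$; there is no reason an arbitrary name for a new $x\subseteq V$ is moved by $\pi_S$ at all, let alone that $p\Vdash\pi_S\dot{x}\neq\pi_T\dot{x}$ for continuum many $S,T$. The ``$V$-definable bijection'' when $V[x]=V[G]$ is likewise unjustified: the evaluation map $c\mapsto\dot{x}_G$ need not be injective, so forced distinctness of $\pi_S\dot{c}$ does not transfer. The paper's solution is to build, for the given $\dot{x}$, a tailored independent sequence $\{b_n\}_{n\in\omega}$ generating a dense countable atomless subalgebra $\mathbb{C}\subseteq\mathbb{B}$: at even stages $b_n$ is assembled from Boolean values of the form $\llbracket k_s\in\dot{x}\rrbracket$, so that flipping $b_n$ provably changes a membership fact about $\dot{x}$; odd stages interleave elements of $\mathbb{P}$ to guarantee density. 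Automorphisms of $\mathbb{C}$ then lift to $\mathbb{B}$ via Sikorski, and a density argument yields $\Vdash\sigma_f\dot{x}\neq\sigma_g\dot{x}$ for all $f\neq g\in 2^\omega$. The essential point is that the automorphisms must be custom-built from $\dot{x}$; off-the-shelf bit-flips will not do.
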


It should be clear that Lemma \ref{lem:MovingNames} follows directly
from this claim. It is the heart of our problem. Our plan is to proceed
inductively, so we start by taking a moment to set up a nice ranking
for the sets we are interested in. Working in $V[G]$, we start by
defining a set $A$ inductively by letting:
\begin{itemize}
\item $A_{0}$ be the set of those $x\in H(\omega_{1})$ such that $X\subseteq V$;
\item $A_{\alpha+1}$ be the set of those countable $x$ such that $x\subseteq A_{\alpha}$;
and
\item $A_{\lambda}=\bigcup_{\alpha\in\lambda}A_{\alpha}$ when $\lambda$
is a limit ordinal. 
\end{itemize}
We then note that $A_{\alpha}=A_{\omega_{1}}$ whenever $\alpha\geq\omega_{1}$.
As such, we let $A=A_{\omega_{1}}$ and we let the $A$-rank of a
set $x\in A$ be the least $\alpha$ such that $x\in A_{\alpha}$.
Note that $A=H(\omega_{1})^{V[G]}$. Next we define a set $B$ by
letting:
\begin{itemize}
\item $B_{0}$ be the set of those $x\in H(\omega_{1})$ such that $x\subseteq V$
but $x\notin V$; 
\item $B_{\alpha+1}$ be the set of countable $x$ such that $x\subseteq A_{\alpha}$
and $x\cap B_{\alpha}\neq\emptyset$; and
\item $B_{\lambda}$ be $\bigcup_{\alpha<\lambda}B_{\alpha}$ for limit
ordinals $\lambda$. 
\end{itemize}
As with the $A$-sequence, we see that $B_{\alpha}=B_{\omega_{1}}$
for all $\alpha\geq\omega_{1}$ and so we let $B=B_{\omega_{1}}$.
We note then that $B$ is the set of hereditarily countable sets added
in the generic extension and so $B=H(\omega_{1})^{V[G]}\backslash V$.
The proof of Theorem \ref{thm:FlexibleNames} relies on two technical
lemmas for the base and successor cases. As with our main theorem,
we shall proceed backwards by stating the lemmas (without proof) and
proving Theorem \ref{thm:FlexibleNames} on their basis. The proof
of the lemmas will then occupy us for the remainder of the paper. 
\begin{lem}
If $x\in V[G]\backslash V$ is such that $x\subseteq V$, then $x$
has a flexible name.\label{lem:Base}
\end{lem}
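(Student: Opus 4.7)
The plan is to reduce to moving a Cohen real, exploiting the rich automorphism group of the Cohen algebra. Since $V\subsetneq V[x]\subseteq V[G]$ and intermediate $ZFC$-models of Cohen extensions are themselves Cohen extensions of the ground by standard results, I would fix a Cohen-over-$V$ real $h\in V[x]$ with $V[h]=V[x]$. Because $h\in V[x]$, there is a $V$-definable function $r$ with $r(x)=h$, and because $x\in V[h]$ there is a $V$-definable $f$ with $f(h)=x$; I would choose $f$ so that $r\circ f$ is the identity on Cohen-over-$V$ reals, making $f$ \emph{injective} on that class. Letting $\mathbb{B}_{h}\hookrightarrow\mathbb{B}$ be the complete sub-algebra corresponding to $h$, with canonical name $\dot{h}$, the candidate flexible name is $\dot{x}=\check{f}(\dot{h})$, which interprets to $x$.

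Next, for each $s\in2^{\omega}\cap V$ I would construct an involution $\sigma_{s}\in Aut(\mathbb{B})^{V}$ by bit-flipping the ``$h$-direction'': on $\mathbb{B}_{h}$ it sends $\llbracket h(n)=1\rrbracket$ to $\llbracket h(n)=1\oplus s(n)\rrbracket$, and I would extend it to $\mathbb{B}$ as the identity on a complementary sub-algebra via the tensor decomposition $\mathbb{B}\cong\mathbb{B}_{h}\otimes\mathbb{B}_{h}^{\perp}$ available for the Cohen algebra. Given $p\in\mathbb{B}$, let $S_{p}\subseteq\omega$ be the (countable, by c.c.c.) set of $h$-coordinates on which $p$ nontrivially depends, and take $\mathcal{A}_{p}=\{\sigma_{s}:s\in2^{\omega}\cap V,\,s\upharpoonright S_{p}\equiv0\}$. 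Every such $\sigma_{s}$ fixes $p$, and $|\mathcal{A}_{p}|=2^{\omega}>\omega$.

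For the flexibility, given distinct $s,s'\in\mathcal{A}_{p}$, the interpretations are $(\sigma_{s}\dot{x})_{G}=f(h\oplus s)$ and $(\sigma_{s'}\dot{x})_{G}=f(h\oplus s')$. Since XOR with a ground-model real preserves Cohen genericity, both $h\oplus s$ and $h\oplus s'$ are Cohen-over-$V$, and the injectivity of $f$ on such reals yields the forced inequality
\[
\llbracket\check{f}(\dot{h}\oplus\check{s})=\check{f}(\dot{h}\oplus\check{s}')\rrbracket\leq\llbracket\dot{h}\oplus\check{s}=\dot{h}\oplus\check{s}'\rrbracket=\llbracket\check{s}=\check{s}'\rrbracket=0,
\]
whence $p\Vdash\sigma_{s}\dot{x}\neq\sigma_{s'}\dot{x}$, as required.

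The hard part will be the technical setup around sub-Cohen algebras: pinning down the tensor decomposition $\mathbb{B}\cong\mathbb{B}_{h}\otimes\mathbb{B}_{h}^{\perp}$ used to lift bit-flips of $\mathbb{B}_{h}$ to automorphisms of $\mathbb{B}$, and arranging the choice of $h$ and $f$ so that $f$ is genuinely injective on all the Cohen-over-$V$ reals that come into play. A subtler difficulty is that for certain $p\in\mathbb{B}$ the candidate name $\check{f}(\dot{h})$ can become constant below $p$ (for instance if $p$ decides $\dot{h}$); handling this uniformly in $p$ will likely require augmenting $\check{f}(\dot{h})$ with further ``generic'' coordinates so that the resulting name remains non-trivial below every $p$, and it is in this augmentation that the main technical work lies.
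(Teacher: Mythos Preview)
Your strategy has the right flavour but contains a genuine gap that is not merely technical. The core problem is your claim that for every $p\in\mathbb{B}$ the set $S_{p}$ of $h$-coordinates on which $p$ depends leaves enough room for $|\mathcal{A}_{p}|=2^{\omega}$. Since $\mathbb{B}=ro(Add(\omega,1))$, there are only $\omega$ many $h$-coordinates to begin with, so the c.c.c.\ remark is vacuous; what you actually need is that $\omega\setminus S_{p}$ is infinite, and this fails in general. For a concrete obstruction, take $a\in 2^{\omega}\cap V$ not eventually constant and let $p=\llbracket \dot h<_{\mathrm{lex}}\check a\rrbracket$. Then $\sigma_{e_{n}}(p)=\llbracket \dot h<_{\mathrm{lex}}\check{(a\oplus e_{n})}\rrbracket\neq p$ for every $n$, so the only bit-flip fixing $p$ is the identity and your $\mathcal{A}_{p}$ collapses. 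Your closing paragraph worries instead about $p$ ``deciding $\dot h$'', which cannot happen; the real obstacle is that bit-flips are far too small a subgroup of $Aut(\mathbb{B})$ to stabilise arbitrary regular open sets.

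A second, independent gap is the injectivity of $f$. You want $r\circ f$ to be the identity on all Cohen-over-$V$ reals, but $r$ is a fixed function determined by the single instance $r(x)=h$; nothing guarantees that $r$ is even defined, let alone a left inverse for $f$, on $f(h')$ for other Cohen $h'$. What one actually gets is a condition $q$ below which $\Vdash r(\dot x)=\dot h$, and hence injectivity of $f$ only below $q$; below $\neg q$ you have no control, yet the flexibility clause must hold at \emph{every} $p\in\mathbb{B}$. Patching the name below $\neg q$ interacts badly with the first gap, since your $\sigma_{s}$ need not fix $q$ either.

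The paper resolves both issues simultaneously by abandoning any pre-given ``Cohen direction'' and instead constructing, directly from the name $\dot x$, an independent sequence $\{b_{n}\}_{n\in\omega}$ generating a countable atomless dense subalgebra $\mathbb{C}\subseteq\mathbb{B}$. The even generators $b_{2n}$ are chosen as Boolean values $\llbracket k\in\dot x\rrbracket$ that genuinely split $\dot x$, so that flipping them is \emph{forced} to change $\dot x$; the odd generators ensure density. A homogeneity reduction to $b=\top$ disposes of the need to stabilise arbitrary conditions, and a lifting lemma extends automorphisms of $\mathbb{C}$ to all of $\mathbb{B}$. The moral is that the automorphisms must be tailored to $\dot x$, not to an auxiliary Cohen real.
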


\begin{lem}
If $x\in V[G]\backslash V$ is countable and has a member $y\in x$
with a flexible name, then $x$ has a flexible name.\label{lem:Step}
\end{lem}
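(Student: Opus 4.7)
The plan is to construct an explicit flexible name $\dot{x}$ for $x$ in which $\dot{y}$ is syntactically a member. Working inside $V[G]$, I enumerate $x = \{y_n : n<\omega\}$ with $y_0 = y$ and pick names $\dot{y}_n\in V^{\mathbb{B}}$ with $(\dot{y}_n)_G = y_n$ in a uniform way (extracted, say, from a single $\mathbb{B}$-name for the enumeration), setting $\dot{y}_0 := \dot{y}$. The name $\dot{x} := \{(\dot{y}_n,\mathbf{1}_{\mathbb{B}}) : n<\omega\}$ then lies in $V^{\mathbb{B}}$, satisfies $\dot{x}_G = x$, and has the crucial property that $\mathbf{1}_{\mathbb{B}}\Vdash\sigma\dot{y}\in\sigma\dot{x}$ for every $\sigma\in Aut(\mathbb{B})^V$.

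Fix $p\in\mathbb{B}$ and let $\mathcal{A}'_p\subseteq Aut(\mathbb{B})^V$ be an uncountable family witnessing flexibility of $\dot{y}$ at $p$. For each $\tau\in\mathcal{A}'_p$ and $n<\omega$, define the bad set
\[
B_{\tau,n} := \{\sigma\in\mathcal{A}'_p : \llbracket \sigma\dot{y} = \tau\dot{y}_n \rrbracket \wedge p > 0\}.
\]
The main step is the claim that $|B_{\tau,n}|\leq\aleph_0$. For distinct $\sigma,\sigma'\in B_{\tau,n}$,
\[
\bigl(\llbracket \sigma\dot{y} = \tau\dot{y}_n\rrbracket\wedge p\bigr)\wedge\bigl(\llbracket \sigma'\dot{y} = \tau\dot{y}_n\rrbracket\wedge p\bigr) \leq \llbracket \sigma\dot{y}=\sigma'\dot{y}\rrbracket \wedge p = 0
\]
by flexibility of $\dot{y}$. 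Hence the witnessing nonzero elements form an antichain in $\mathbb{B}$, and ccc of $\mathbb{B}$ forces $B_{\tau,n}$ to be countable.

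I then build $\langle\sigma_\alpha : \alpha<\omega_1\rangle\subseteq\mathcal{A}'_p$ by recursion, choosing at stage $\alpha$ some $\sigma_\alpha\in\mathcal{A}'_p\setminus\bigcup_{\beta<\alpha,\,n<\omega}B_{\sigma_\beta,n}$; since the exclusion set is at most countable at each stage, the recursion proceeds through $\omega_1$. For $\beta<\alpha$ and every $n$, the condition $\sigma_\alpha\notin B_{\sigma_\beta,n}$ says $p\Vdash\sigma_\alpha\dot{y}\neq\sigma_\beta\dot{y}_n$, and collecting over all $n$ yields $p\Vdash\sigma_\alpha\dot{y}\notin\sigma_\beta\dot{x}$. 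Since $(\sigma_\alpha\dot{y})_H\in(\sigma_\alpha\dot{x})_H$ in any generic $H\ni p$, we conclude $p\Vdash\sigma_\alpha\dot{x}\neq\sigma_\beta\dot{x}$, which, by symmetry of $\neq$, suffices for every distinct pair. Setting $\mathcal{A}_p := \{\sigma_\alpha:\alpha<\omega_1\}$ thus witnesses flexibility of $\dot{x}$ at $p$.

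The main obstacle is the asymmetry between the two potential sufficient conditions $p\Vdash\sigma_\alpha\dot{y}\notin\sigma_\beta\dot{x}$ and $p\Vdash\sigma_\beta\dot{y}\notin\sigma_\alpha\dot{x}$. The first is tractable because ccc and flexibility of $\dot{y}$ jointly yield the countable bound on $B_{\tau,n}$; the second would require forced distinctness of the $\sigma\dot{y}_n$'s for $n\geq 1$, which the hypothesis does not supply. The saving observation is that set inequality is symmetric, so arranging only the first direction at each stage of the recursion suffices to make $\dot{x}$ flexible.
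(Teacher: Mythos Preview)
Your proof is correct, and while it shares the paper's starting idea of building a name for $x$ that syntactically contains $(\dot{y},1)$, the two arguments diverge from that point in an interesting way.

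The paper takes $\dot{x}=\dot{x}_0\cup\{\langle\dot{y},1\rangle\}$ where $\dot{x}_0$ is merely required to satisfy $\Vdash|\dot{x}_0|\leq\omega$, and then argues by contradiction: assuming $\dot{x}$ is not flexible at some $b$, it extracts a countable maximal family $\{\pi_n\}$, and then for each $\sigma$ in the uncountable witness $\mathcal{A}_b$ for $\dot{y}$ it finds $r_\sigma\in\mathbb{P}$ and $n_\sigma$ with $r_\sigma\Vdash\sigma\dot{y}\in\pi_{n_\sigma}\dot{x}$; a pigeonhole on the countable set $\mathbb{P}$ then yields a single condition forcing uncountably many pairwise-distinct $\sigma\dot{y}$'s into a countable set. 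You instead choose a name with an explicit $\omega$-enumeration, so that $\sigma_\beta\dot{x}$ is literally $\{(\sigma_\beta\dot{y}_n,1):n<\omega\}$; this lets you phrase the obstruction combinatorially as the bad sets $B_{\tau,n}$, bound them via the ccc of $\mathbb{B}$, and build $\mathcal{A}_p$ directly by an $\omega_1$-recursion rather than reach a contradiction. Your argument is thus constructive where the paper's is indirect, and it trades the paper's use of a countable dense subset for the (weaker) ccc; conversely, the paper's version avoids having to manufacture a ground-model enumeration of names, which you handle by extracting $\dot{y}_n$ from a single name for an enumeration (this step is fine by fullness, though it would bear a sentence of justification). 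The asymmetry you flag at the end---that you only arrange $p\Vdash\sigma_\alpha\dot{y}\notin\sigma_\beta\dot{x}$ for $\beta<\alpha$---is indeed harmless, and is exactly parallel to the paper's one-sided reliance on $\Vdash\sigma\dot{y}\in\sigma\dot{x}$.
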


\begin{proof}
(of Theorem \ref{thm:FlexibleNames}) We proceed by induction on the
$A$-rank of elements $x$ of $B$. Suppose first that $A$-rank of
$x$ is $0$. Then $x\in V[G]\backslash V$ and $x\subseteq V$, so
Lemma \ref{lem:Base} tells us that $x$ has a flexible name. Suppose
next that the $A$-rank of $x$ is $\alpha+1$. Then we may fix some
$y\in x\cap B_{\alpha}$ and the inductive hypothesis tells us that
$y$ has a flexible name. Then since $x\in V[G]\backslash V$ is countable,
we may use Lemma \ref{lem:Step} to see that $x$ has a flexible name.
Finally, suppose that $A$-rank of $x$ is $\lambda$ for some countable
limit ordinal. Then we see that $x\in B_{\alpha}$ for some $\alpha<\lambda$
and so our inductive hypothesis tells us that $x$ has a flexible
name. 
\end{proof}

\section{The Induction Step Lemma}

We'll start with Lemma \ref{lem:Step} as its proof is somewhat shorter. 
\begin{proof}
Let $\dot{x}_{0}\in V^{\mathbb{B}}$ be such that $(\dot{x}_{0})_{G}=x$
and $\Vdash|\dot{x}_{0}|\leq\omega$. Fix $y\in x$ with a flexible
name $\dot{y}\in V^{\mathbb{B}}$ and let 
\[
\dot{x}=\dot{x}_{0}\cup\{\langle\dot{y},1\rangle\}.
\]
Then note that $\Vdash\dot{y}\in\dot{x}$ and $\Vdash|\dot{x}|\leq\omega$.
We suppose toward a contradiction that $\dot{x}$ is not a flexible
name for $x$. Then we may fix some $b\in\mathbb{B}$ such that for
all $\mathcal{A}\subseteq Aut(\mathbb{B})^{V}$, if every $\sigma\neq\tau\in\mathcal{A}$
is such that $\sigma(b)=b=\tau(b)$ and $b\Vdash\sigma\dot{x}\neq\tau\dot{x}$,
then $|\mathcal{A}|\leq\omega$. This means we may fix a sequence
$\langle\pi_{n}\dot{x}\rangle_{n\in\omega}$ where each $\pi_{n}\in Aut(\mathbb{B})^{V}$
and $\pi_{n}(b)=b$ that is maximal in the sense that if $\sigma\in Aut(\mathbb{B})^{V}$
is such that $\sigma(b)=b$, then there is some $n\in\omega$ such
that $b\nVdash\sigma\dot{x}\neq\pi_{n}\dot{x}$.

Now since $\dot{y}$ is a flexible name, we may fix some $\mathcal{A}_{b}\subseteq Aut(\mathbb{B})^{V}$
with $|\mathcal{A}_{b}|>\omega$ such that for all $\sigma\neq\tau\in\mathcal{A}_{b}$,
$\sigma(b)=b=\tau(b)$ and $b\Vdash\sigma\dot{y}\neq\tau\dot{y}$.
Recalling that we may regard $\mathbb{P}$ is dense in $\mathbb{B}$,
we then observe that for all $\sigma\in\mathcal{A}_{b}$, we may fix
$n_{\sigma}$ and $r_{\sigma}\leq b$ with $r_{\sigma}\in\mathbb{P}$
such that $r_{\sigma}\Vdash\sigma\dot{y}\in\pi_{n_{\sigma}}\dot{x}$.
This follows since we know $\Vdash\dot{y}\in\dot{x}$ and so $\Vdash\sigma\dot{y}\in\sigma\dot{x}$.
Moreover, we may fix some $n\in\omega$ such that $b\nVdash\sigma\dot{x}\neq\pi_{n}\dot{x}$;
and using that, then fix some $r\leq b$ with $r\in\mathbb{P}$ such
that $r\Vdash\sigma\dot{x}=\pi_{n}\dot{x}$. Thus, $r\Vdash\sigma\dot{y}\in\pi_{n}\dot{x}$
as required. 

Then since $|\mathcal{A}_{b}|>\omega=|\mathbb{P}|$ we may fix some
$\mathcal{B}\subseteq\mathcal{A}_{b}$ with $|\mathcal{B}|>\omega$
such that for all $\sigma\in\mathcal{B}$, $r_{\sigma}=r^{*}$ for
some $r\leq b$. But then we have an uncountable $\mathcal{B}$ such
that for all $\sigma\neq\tau\in\mathcal{B}$: 
\begin{enumerate}
\item $r^{*}\Vdash\sigma\dot{y}\neq\tau\dot{y}$; 
\item $r^{*}\Vdash\sigma\dot{y}\in\pi_{n}\dot{x}$ for some $n\in\omega$;
and
\item $r^{*}\Vdash|\pi_{n}\dot{x}|\leq\omega$. 
\end{enumerate}
This is a contradiction.
\end{proof}

\section{The Base Case Lemma}

While the proof of Lemma \ref{Karagila} is relatively simple and
elegant, the generalization that seems to be required for Lemma \ref{lem:Base}
is more torturous, although its underlying strategy remains the same.
This time we want enough automorphims to force uncountably many shifted
denotations of some name. A natural strategy presents itself. We will
build a perfect tree of automorphisms by defining them inductively
along initial segments of $2^{<\omega}$. This has the potential to
give us continuum many automorphism and thus satisfy the requirements
of Lemma \ref{lem:Base}. However, as the proof of Lemma \ref{lem:Step}
illustrates, it is important that these shifted names are forced to
be distinct rather than just being distinct in some generic extension.
This has made it more convenient to shift to the realm of complete
Boolean algebras where automorphisms are much more plentiful. Before
we prove Lemma \ref{lem:Base}, we first recall some facts about Boolean
algebras that give us a lifting property that plays an important role
in the main proof. 

\subsection{A lifting fact about Boolean algebras}

Our goal is to show that any automorphism of a countable atomless
dense subalgebra of $\mathbb{B}$ can be lifted to $\mathbb{B}$.
This will be helpful as our proof below defines such a subalgebra
of $\mathbb{B}$ and we want to make use of automorphisms upon it.
I'll note that I think the little sequence below will be relatively
obvious to the folk, but I haven't found it, or something sufficiently
like it, recorded in the literature. First, we recall Sikorski's extension
theorem. 
\begin{fact}
(Sikorski) Let $\mathbb{A}$ be a subalgebra of $\mathbb{A}^{*}$
and $f:\mathbb{A}\to\mathbb{B}$ be an injective homomorphism where
$\mathbb{B}$ is a complete Boolean algebra. Then $f$ can be extended
to an injective homomorphism $f^{*}:\mathbb{A}^{*}\to\mathbb{B}$
where $f^{*}\restriction\mathbb{A}=id$.\footnote{See Theorem 5.9 and Lemma 5.11 in \citep{KoppBool}.}
\end{fact}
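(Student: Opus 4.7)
The plan is to apply Zorn's lemma to the poset $\mathcal{P}$ of injective homomorphisms $g:\mathbb{A}'\to\mathbb{B}$, where $\mathbb{A}\subseteq\mathbb{A}'\subseteq\mathbb{A}^{*}$ is a subalgebra and $g\restriction\mathbb{A}=f$, ordered by extension. Unions of chains supply upper bounds in $\mathcal{P}$, so Zorn's lemma yields a maximal element $f^{*}:\mathbb{A}'\to\mathbb{B}$. The theorem then reduces to proving $\mathbb{A}'=\mathbb{A}^{*}$, which I do by contradiction: if some $a\in\mathbb{A}^{*}\setminus\mathbb{A}'$ exists, I adjoin it to $\mathbb{A}'$ and extend $f^{*}$ to the generated subalgebra, contradicting maximality.

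Completeness of $\mathbb{B}$ enters through the construction of the one-step extension. I set
$$b^{-}=\bigvee\{f^{*}(x):x\in\mathbb{A}',\ x\leq a\},\qquad b^{+}=\bigwedge\{f^{*}(y):y\in\mathbb{A}',\ a\leq y\},$$
noting that $b^{-}\leq b^{+}$ by monotonicity of $f^{*}$. Every element of the generated subalgebra $\mathbb{A}'[a]$ admits a canonical form $(x\wedge a)\vee(y\wedge\neg a)$ with $x,y\in\mathbb{A}'$, and for any $b\in[b^{-},b^{+}]$ the assignment
$$\tilde{f}\bigl((x\wedge a)\vee(y\wedge\neg a)\bigr)=(f^{*}(x)\wedge b)\vee(f^{*}(y)\wedge\neg b)$$
should be a well-defined homomorphism extending $f^{*}$. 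Well-definedness follows from the observation that if $z\in\mathbb{A}'$ satisfies $z\leq\neg a$, then $\neg z\geq a$, so $b\leq b^{+}\leq f^{*}(\neg z)$ and hence $b\wedge f^{*}(z)=0$; the standard Boolean-algebra verifications then show $\tilde f$ respects meets, joins, and complements.

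The main obstacle, as I see it, is choosing $b\in[b^{-},b^{+}]$ so that $\tilde f$ remains \emph{injective}. Since a Boolean-algebra homomorphism is injective iff it is nonzero on every nonzero element, this imposes the constraints $b\wedge f^{*}(x)\neq 0$ whenever $x\in\mathbb{A}'$ satisfies $x\wedge a\neq 0$, together with the symmetric constraints $\neg b\wedge f^{*}(y)\neq 0$ whenever $y\wedge\neg a\neq 0$. My plan is to show these two families are jointly satisfiable inside $[b^{-},b^{+}]$, leveraging that $f^{*}$ is already injective on $\mathbb{A}'$ and that $a\notin\mathbb{A}'$ means no element of $\mathbb{A}'$ pins down $a$ or $\neg a$ exactly. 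The extremal choices $b=b^{-}$ and $b=b^{+}$ each clear one family cleanly but may fail the other, so the real work is to construct an intermediate $b$; I expect this to be where completeness of $\mathbb{B}$ is genuinely used beyond the mere definitions of $b^{\pm}$, and it is the step I would want to treat most carefully before the Zorn framework finishes the argument.
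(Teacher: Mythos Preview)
The paper does not prove this Fact; it is quoted with a citation to Koppelberg, so there is no in-paper argument to compare your attempt against. Assessing your outline on its merits: the Zorn-plus-one-step-extension scheme is the standard route to Sikorski's theorem and correctly yields \emph{some} extension $f^{*}$. But the gap you flag at the injectivity step cannot be closed, because the Fact as literally stated is false. Take $\mathbb{A}=\{0,1\}$, $\mathbb{A}^{*}$ the four-element algebra, $\mathbb{B}=\{0,1\}$ (which is complete), and $f$ the identity; every homomorphism $\mathbb{A}^{*}\to\{0,1\}$ collapses, so no injective extension exists. More generally $|\mathbb{A}^{*}|>|\mathbb{B}|$ already obstructs injectivity. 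Your hope of threading an intermediate $b\in[b^{-},b^{+}]$ through both constraint families therefore cannot succeed without an additional hypothesis, and no amount of care at that step will rescue the argument in full generality.

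What the paper actually needs, in the lemma immediately following, is the special case where $\mathbb{A}$ is dense in $\mathbb{A}^{*}$ (indeed $\mathbb{A}^{*}$ is the completion of $\mathbb{A}$ there). In that situation the Sikorski extension is unique and its kernel is trivial: if $a\neq 0$ in $\mathbb{A}^{*}$, density gives some $x\in\mathbb{A}$ with $0<x\leq a$, whence $f^{*}(a)\geq f(x)>0$ by injectivity of $f$. Your framework handles that restricted statement without difficulty; it is only the Fact's unrestricted injectivity clause that is out of reach.
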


\begin{lem}
Suppose $\mathbb{C}$ is a countable atomless Boolean algebra that
is dense in $\mathbb{B}$. Let $\mathbb{D}=ro(\mathbb{C})$ and $i:\mathbb{C}\to\mathbb{D}$
be the canonical embedding. Then there is some $j:\mathbb{D}\cong\mathbb{B}$
such that $j\circ i=id$.\label{lem:DtoB}
\end{lem}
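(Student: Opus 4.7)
The plan is to reduce the claim to Sikorski's extension theorem combined with a density argument. Both $\mathbb{B}$ and $\mathbb{D}$ are complete Boolean algebras containing $\mathbb{C}$ as a dense subalgebra, so morally each should serve as a completion of $\mathbb{C}$, and the desired $j$ should be the canonical identification of these two completions.

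Concretely, I would first apply Sikorski's theorem to the inclusion $\mathbb{C} \hookrightarrow \mathbb{B}$, viewed as an injective homomorphism from the subalgebra $\mathbb{C}$ of $\mathbb{D}$ into the complete algebra $\mathbb{B}$. This yields an injective homomorphism $j : \mathbb{D} \to \mathbb{B}$ with $j \restriction \mathbb{C} = \text{id}$, so that $j \circ i = \text{id}_{\mathbb{C}}$. All that remains is surjectivity.

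For surjectivity, fix $b \in \mathbb{B}$ and let $S = \{c \in \mathbb{C} \mid c \leq b\}$. The natural preimage candidate is $d = \bigvee^{\mathbb{D}} S$, which exists by completeness of $\mathbb{D}$. The inequality $j(d) \geq b$ will be immediate: $j(d) \geq j(c) = c$ for every $c \in S$, and density of $\mathbb{C}$ in $\mathbb{B}$ gives $b = \bigvee^{\mathbb{B}} S$. For the reverse inequality, I would assume $j(d) \wedge (-b) > 0$ in $\mathbb{B}$ and use density of $\mathbb{C}$ in $\mathbb{B}$ to extract $c_{0} \in \mathbb{C}$ with $0 < c_{0} \leq j(d) \wedge (-b)$. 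Injectivity of $j$ then forces $c_{0} \leq d$ in $\mathbb{D}$ (since $j(c_{0} \wedge d) = c_{0} \wedge j(d) = c_{0} = j(c_{0})$). On the other hand, $c_{0} \wedge c = 0$ in $\mathbb{C}$ for every $c \in S$, because $c \leq b$ and $c_{0} \wedge b = 0$. Complete distributivity in $\mathbb{D}$ then gives $c_{0} = c_{0} \wedge d = \bigvee^{\mathbb{D}}_{c \in S}(c_{0} \wedge c) = 0$, the desired contradiction.

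The main obstacle is precisely this surjectivity step: Sikorski's theorem only produces a Boolean homomorphism, not a complete one, so we cannot simply argue that $j$ commutes with the arbitrary supremum defining $d$. The finesse above uses density of $\mathbb{C}$ in $\mathbb{B}$ to relocate the putative obstruction into $\mathbb{C}$, where $j$ acts trivially, and then uses density of $\mathbb{C}$ in $\mathbb{D}$ together with infinite distributivity inside the complete algebra $\mathbb{D}$ to close the argument.
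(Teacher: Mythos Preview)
Your argument is correct and follows the same overall strategy as the paper: apply Sikorski's extension theorem to obtain an injective $j:\mathbb{D}\to\mathbb{B}$ fixing $\mathbb{C}$, and then use density of $\mathbb{C}$ in $\mathbb{B}$ to establish surjectivity. The only difference is in how the surjectivity step is packaged. The paper first observes that $j$ is a dense embedding (since $j$ fixes the dense subalgebra $\mathbb{C}$), then cites the standard fact that a dense embedding between complete Boolean algebras is automatically a complete embedding, and finally pushes the relevant supremum through $j$. You instead argue surjectivity directly, without invoking that lemma, by locating a putative obstruction inside $\mathbb{C}$ and killing it via the infinite distributive law in $\mathbb{D}$. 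Your route is slightly more self-contained, while the paper's is a touch shorter by outsourcing to the folklore lemma; substantively they amount to the same ``uniqueness of completion'' argument. One terminological nit: what you call ``complete distributivity'' is really just the infinite distributive law $a\wedge\bigvee_i b_i=\bigvee_i(a\wedge b_i)$, which holds in every complete Boolean algebra; the phrase ``complete distributivity'' is sometimes reserved for a strictly stronger property.
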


\begin{proof}
Using Sikorski's extension theorem, we get an embedding $j:\mathbb{D}\to\mathbb{B}$
such that $j\circ i=id$. It will suffice to show that $j$ is a surjection.
We do this in stages. First we claim that $j$ is a dense embedding.
To see this let $b\in\mathbb{B}$. Then fix $c\in\mathbb{C}$ such
that $c\leq b$. Then we see that $j\circ i(c)\leq b$, so we have
$i(c)\in\mathbb{D}$ with $j(i(c))\leq b$ as required. Since $j$
is a dense embedding between complete Boolean algebras, we see that
$j$ is a complete embedding.\footnote{See Lemma VII.8 and Exercise VII.C7 in \citep{KunenST}.}
To see that $j$ is surjection, let $b\in\mathbb{B}$. Then since
$j$ is dense and $\mathbb{D}$ is complete we see that if we let
\[
X=\{d\in\mathbb{D}\ |\ j(d)\leq b\}
\]
is dense below $b$ and so $b=\bigvee j[X]=j(\bigvee X)$ where $\bigvee X\in\mathbb{D}$.
\end{proof}
\begin{lem}
Suppose $\mathbb{C}$ is a countable atomless Boolean algebra. Let
$\mathbb{D}=ro(\mathbb{C})$ and $i:\mathbb{C}\to\mathbb{D}$ be the
canonical embedding. Then whenever $f:\mathbb{C}\cong\mathbb{C}$
there is some $g:\mathbb{D}\cong\mathbb{D}$ such that for all $c\in\mathbb{C}$,
$f(c)=i^{-1}\circ g\circ i(c)$.\label{lem:liftCtoD}
\end{lem}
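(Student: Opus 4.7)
The plan is to use Sikorski's extension theorem to lift the automorphism $f$ through the dense embedding $i$, and then argue, essentially as in the proof of Lemma \ref{lem:DtoB}, that the resulting lift must be an automorphism because it is a dense injective homomorphism between complete Boolean algebras.

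In more detail, identify $\mathbb{C}$ with its image $i[\mathbb{C}]$, which is a subalgebra of $\mathbb{D}$ that is dense in $\mathbb{D}$ by definition of $\mathbb{D}=ro(\mathbb{C})$. Define $\tilde{f}:i[\mathbb{C}]\to\mathbb{D}$ by $\tilde{f}(i(c))=i(f(c))$. This is well-defined and injective because $i$ is an injection and $f$ is an automorphism, and it is a homomorphism because $i$ and $f$ are. Apply Sikorski's extension theorem to $\tilde{f}$, viewed as an injective homomorphism from the subalgebra $i[\mathbb{C}]\subseteq\mathbb{D}$ into the complete Boolean algebra $\mathbb{D}$; this yields an injective homomorphism $g:\mathbb{D}\to\mathbb{D}$ with $g(i(c))=i(f(c))$ for every $c\in\mathbb{C}$. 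Since $i$ is injective, this rearranges to $f(c)=i^{-1}\circ g\circ i(c)$ for all $c\in\mathbb{C}$, as required by the statement.

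It then remains to check that $g$ is surjective and hence an automorphism of $\mathbb{D}$. First I would verify that the image of $g$ is dense in $\mathbb{D}$: given a nonzero $d\in\mathbb{D}$, density of $i[\mathbb{C}]$ in $\mathbb{D}$ yields a nonzero $c_0\in\mathbb{C}$ with $i(c_0)\leq d$; then setting $d'=i(f^{-1}(c_0))$ gives a nonzero element of $\mathbb{D}$ with $g(d')=i(f(f^{-1}(c_0)))=i(c_0)\leq d$. With density in hand, the rest of the verification is a direct replay of the end of the proof of Lemma \ref{lem:DtoB}: $g$ is a dense injective homomorphism between complete Boolean algebras, hence a complete embedding, and the supremum argument (if $X=\{d\in\mathbb{D}\mid g(d)\leq b\}$ then $g[X]$ is dense below $b$, so $b=\bigvee g[X]=g(\bigvee X)$) shows that every $b\in\mathbb{D}$ lies in the image of $g$.

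The argument poses no serious obstacle once one notices the correct bookkeeping: one must use $f^{-1}$ (available precisely because $f$ is an automorphism of $\mathbb{C}$, not merely an embedding) in the density check, since one needs to hit a prescribed $i(c_0)$ downstairs rather than merely something in its vicinity. Everything else is a mechanical transcription of the previous lemma's proof with the twist $\tilde{f}$ inserted before Sikorski is invoked.
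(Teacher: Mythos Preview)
Your argument is correct, but it takes a different route from the paper. The paper works concretely with the realization of $\mathbb{D}$ as the regular open subsets of $\mathbb{C}$ and simply defines $g(A)=f[A]$; the bulk of its proof is a direct verification that $f[A]$ is regular open whenever $A$ is, with the remaining isomorphism checks left to the reader. You instead stay abstract: you push $f$ through $i$, invoke Sikorski to extend, and then recycle the density-plus-completeness surjectivity argument already proved for Lemma~\ref{lem:DtoB}. Your approach is pleasantly economical in that it reuses machinery already on the page and never unpacks what $ro(\mathbb{C})$ looks like; the paper's approach, by contrast, yields an explicit, canonical formula for the lift and makes no appeal to Sikorski or to the previous lemma. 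Both are short, and the key observation you flag---that $f^{-1}$ is needed for the density check---is exactly the point at which the hypothesis that $f$ is an automorphism (rather than a mere embedding) enters your version.
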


\begin{proof}
Recall that $\mathbb{D}$ consists of the regular open subsets of
$\mathbb{C}$. More specifically, $A\in\mathbb{D}$ iff $A\subseteq\mathbb{C}$
is such that:
\begin{itemize}
\item $A$ is open; i.e., for all $c\in A$ if $d\leq c$, then $d\in A$;
and
\item $A$ is regular; i.e., if $c\in\mathbb{C}$ is such that $A$ is dense
below $c$, then $c\in A$.\footnote{This is essentially the ``cut'' definition used in Theorem 7.13
of \citep{JechST}.}
\end{itemize}
Moreover, $i:\mathbb{C}\to\mathbb{D}$ is such that $i(c)=c\downarrow$.\footnote{I.e., $c\downarrow=\{c^{*}\in\mathbb{C}\ |\ c^{*}\leq c\}$.}
For $A\in\mathbb{D}$, we let $g(A)=f[A]$. We claim that $g$ is
an an isomorphism. To illustrate this, we just show that $g(A)$ is
a regular open set in $\mathbb{C}$ and leave the rest to the reader.
If $A\in\mathbb{C}$, then $A$ is regular open. We claim that $f[A]$
is regular open. To see that $f[A]$ is open, suppose that $c\in f[A]$
and that $d\leq c$. Then we see that $f^{-1}(d)\leq f^{-1}(c)$ and
so since $f^{-1}(c)\in A$ and $A$ is open, we have $f(d)\in A$.
Thus, $d\in f[A]$ as required. To see that $f[A]$ is regular, suppose
$c\in\mathbb{C}$ is such that of all $d\leq c$ there is some $e\leq d$
with $e\in f[A]$. It will suffice to show that $c\in f[A]$, or in
other words, that $f^{-1}(c)\in A$. Let $d\leq f^{-1}(c)$, so we
have $f(d)\leq c$. Then by our assumption we may fix some $e\leq f(d)$
such that $e\in f[A]$. But then we see that $f^{-1}(e)\leq d$ and
so we've established that $A$ is dense below $f^{-1}(c)$ which means
by the regularity of $A$ that $f^{-1}(c)\in A$ as required. 
\end{proof}
\begin{lem}
Suppose $\mathbb{C}$ is a countable atomless Boolean algebra that
is dense in $\mathbb{B}$. Then every $f:\mathbb{C}\cong\mathbb{C}$
can be lifted to $\mathbb{B}$ to give some $\sigma_{f}:\mathbb{B}\cong\mathbb{B}$
such that $\sigma_{f}\restriction\mathbb{C}=f$.\label{lem:liftCtoB}
\end{lem}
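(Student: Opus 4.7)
The plan is to obtain $\sigma_f$ by transporting the induced automorphism on $\mathbb{D}=ro(\mathbb{C})$ across the isomorphism $j:\mathbb{D}\cong\mathbb{B}$ provided by Lemma \ref{lem:DtoB}. Concretely, I will use the two preceding lemmas as black boxes and then just chase the relevant diagram: the identification of $\mathbb{C}$ with its image $i[\mathbb{C}]\subseteq\mathbb{D}$, and the identification of $i[\mathbb{C}]$ with its image under $j$ inside $\mathbb{B}$, should match up cleanly because of the compatibility clause $j\circ i = id$ in Lemma \ref{lem:DtoB}.

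First I would fix $\mathbb{D}=ro(\mathbb{C})$, the canonical dense embedding $i:\mathbb{C}\to\mathbb{D}$, and invoke Lemma \ref{lem:DtoB} to pick an isomorphism $j:\mathbb{D}\cong\mathbb{B}$ satisfying $j\circ i = id_{\mathbb{C}}$. Given an automorphism $f:\mathbb{C}\cong\mathbb{C}$, I would then apply Lemma \ref{lem:liftCtoD} to produce an automorphism $g:\mathbb{D}\cong\mathbb{D}$ such that $f(c) = i^{-1}\circ g\circ i(c)$ for every $c\in\mathbb{C}$, or equivalently $g\circ i = i\circ f$.

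The candidate lift is then defined by
\[
\sigma_f \;=\; j\circ g\circ j^{-1}:\mathbb{B}\cong\mathbb{B},
\]
which is an automorphism of $\mathbb{B}$ as a composition of isomorphisms. To verify that it extends $f$, I would take an arbitrary $c\in\mathbb{C}$ and compute: since $j\circ i=id$, we have $j^{-1}(c)=i(c)$, so
\[
\sigma_f(c) \;=\; j(g(i(c))) \;=\; j(i(f(c))) \;=\; f(c),
\]
using $g\circ i = i\circ f$ in the middle step and $j\circ i = id$ at the end. Hence $\sigma_f\restriction\mathbb{C}=f$, as required.

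There is no real obstacle here beyond keeping the three algebras and two embeddings straight; the whole content of the lemma is packaged into Lemmas \ref{lem:DtoB} and \ref{lem:liftCtoD}, and the present step is essentially a diagram chase. The only mild subtlety is the implicit identification of $c\in\mathbb{C}$ with $i(c)\in\mathbb{D}$ inside $\mathbb{B}$, but this is exactly what the clause $j\circ i = id$ in Lemma \ref{lem:DtoB} is designed to license.
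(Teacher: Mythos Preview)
Your proposal is correct and follows essentially the same approach as the paper: both invoke Lemma~\ref{lem:DtoB} to obtain $j:\mathbb{D}\cong\mathbb{B}$ with $j\circ i=id$, invoke Lemma~\ref{lem:liftCtoD} to lift $f$ to $g:\mathbb{D}\cong\mathbb{D}$, and then define $\sigma_f=j\circ g\circ j^{-1}$. Your verification that $\sigma_f\restriction\mathbb{C}=f$ via $j^{-1}(c)=i(c)$ and $g\circ i=i\circ f$ is the same diagram chase the paper performs, just written out a bit more explicitly.
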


\begin{proof}
Let $\mathbb{D}=ro(\mathbb{C})$ and $i:\mathbb{C}\to\mathbb{D}$
be the canonical embedding. Using Lemma \ref{lem:DtoB} fix $j:\mathbb{D}\cong\mathbb{B}$
such that $j\circ i=id$. Then use Lemma \ref{lem:liftCtoD} to obtain
$g:\mathbb{D}\cong\mathbb{D}$ such that for all $c\in\mathbb{C}$,
$f(c)=i^{-1}\circ g\circ i(c)$. Let $\sigma_{f}:\mathbb{B}\cong\mathbb{B}$
be such that for all $b\in\mathbb{B}$, $\sigma_{f}(b)=j^{-1}\circ g\circ j$.
Then we see that for all $c\in\mathbb{C}$
\begin{align*}
f(c) & =i^{-1}\circ g\circ i(c)\\
 & =i^{-1}\circ j^{-1}\circ\sigma_{f}\circ j\circ i(c)=\sigma_{f}(c).
\end{align*}
\end{proof}

\subsection{Proving Lemma \ref{lem:Base}}

We are now ready for the final piece of the proof of Theorem \ref{thm:FlexibleNames},
which will also complete the proof of our main Theorem \ref{thm:main}.
For a little technical notation, for $b\in\mathbb{B}$ we write $\neg^{0}b$
to denote $b$ and $\neg^{1}b$ to denote $\neg b$. We begin by stating
a lemma from which Lemma \ref{lem:Base} obviously follows. 
\begin{lem}
Let $\dot{x}\in V^{\mathbb{B}}$ be such that $\Vdash\dot{x}\subseteq\check{V}\wedge\dot{x}\notin\check{V}$
and every element of $\dot{x}$ is of the form $\langle\check{n},c\rangle\in\check{V}\times\mathbb{B}$.
Then for all $b\in\mathbb{B}$ there is some $\mathcal{A}\subseteq Aut(\mathbb{B})^{V}$
with $|\mathcal{A}|>\omega$ such that for all $\sigma\neq\tau\in\mathcal{A}$,
$\sigma(b)=b=\tau(b)$ and $b\Vdash\sigma\dot{x}\neq\tau\dot{x}$.\label{lem:Base-1}
\end{lem}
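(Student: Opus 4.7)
My plan is to produce a continuum-sized family of automorphisms of $\mathbb{B}$ fixing $b$ that move $\dot{x}$ to pairwise forced-distinct names below $b$. The strategy is to extract from $\dot{x}$ an independent family of Boolean values in $\mathbb{B}|b$, construct bit-flip-style automorphisms on the countable atomless subalgebra they generate, and lift to $\mathbb{B}$ using Lemma \ref{lem:liftCtoB}.

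First, the independent family. Since $\Vdash \dot{x} \notin \check{V}$, any nonzero $\alpha \leq b$ forces $\dot{x} \notin \check{V}$ and hence cannot decide all but finitely many of the questions $\check{n} \in \dot{x}$ (else $\alpha$ would force $\dot{x}$ to equal one of finitely many ground-model sets, contradicting $\alpha \Vdash \dot{x}\notin\check{V}$). I inductively select distinct naturals $n_0, n_1, \ldots$ so that the values $a_k := \llbracket \check{n}_k \in \dot{x} \rrbracket \wedge b$ form an independent family in $\mathbb{B}|b$, meaning every finite Boolean combination $\bigwedge_i (\pm a_{k_i})$ is nonzero. At each stage $k$, the subalgebra generated by $\{a_0, \ldots, a_{k-1}\}$ has finitely many atoms, and I must pick $n_k$ undecided by every such atom simultaneously, possibly after refining to a sub-condition.

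Second, the subalgebra and automorphisms. Augment $\{a_k\}$ to a countable independent family $\{a_k\} \cup \{a_k'\}$ whose generated subalgebra $\mathbb{C}_b$ is dense in $\mathbb{B}|b$; by independence $\mathbb{C}_b$ is freely generated, hence isomorphic to the free countable atomless Boolean algebra. Set $\mathbb{C} = \mathbb{C}_b \oplus \mathbb{D}$, where $\mathbb{D}$ is any countable atomless dense subalgebra of $\mathbb{B}|\neg b$; then $\mathbb{C}$ is a countable atomless subalgebra of $\mathbb{B}$, dense in $\mathbb{B}$, containing $b$. For each $T \subseteq \omega$, the map sending $a_k \mapsto b \setminus a_k$ when $k \in T$ and fixing all other generators extends uniquely, by freeness, to an automorphism of $\mathbb{C}_b$; extending by the identity on $\mathbb{D}$ yields an automorphism $f_T$ of $\mathbb{C}$ with $f_T(b) = b$. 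By Lemma \ref{lem:liftCtoB}, lift $f_T$ to $\sigma_T \in Aut(\mathbb{B})^V$ with $\sigma_T(b) = b$, and put $\mathcal{A} = \{\sigma_T : T \subseteq \omega\}$, of cardinality $2^\omega$.

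For pairwise distinctness, take $T \neq T'$ and $k \in T \triangle T'$. Then $\sigma_T(a_k)$ and $\sigma_{T'}(a_k)$ are $a_k$ and $b \setminus a_k$ in some order, i.e.\ complements within $b$, so their symmetric difference equals $b$. Since $\sigma_T$ and $\sigma_{T'}$ fix $b$, we have $\sigma_T(\llbracket \check{n}_k \in \dot{x} \rrbracket) \wedge b = \sigma_T(a_k)$ and similarly for $\sigma_{T'}$, giving
\[
b \leq \sigma_T(\llbracket \check{n}_k \in \dot{x} \rrbracket) \triangle \sigma_{T'}(\llbracket \check{n}_k \in \dot{x} \rrbracket) \leq \llbracket \sigma_T \dot{x} \neq \sigma_{T'} \dot{x} \rrbracket,
\]
so $b \Vdash \sigma_T \dot{x} \neq \sigma_{T'} \dot{x}$, as required.

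The main obstacle is the independence step: while each atom of an intermediate subalgebra individually leaves infinitely many $\check{n} \in \dot{x}$ undecided, the finitely many such ``decided sets'' may collectively exhaust $\omega$, leaving no common $n_k$ undecided by every atom. Overcoming this likely requires a more careful refinement — for instance, a tree-indexed inductive construction where each atom is treated separately and the resulting local automorphisms are assembled — or replacing naive independence with a weaker condition sufficient for bit-flips to extend. Given a suitable independent family, the remainder of the argument, including the application of Lemma \ref{lem:liftCtoB}, goes through smoothly.
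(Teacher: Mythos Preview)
The obstacle you flag in your final paragraph is not merely a technical nuisance; it is a genuine obstruction, and your proposed construction of the $a_k$ can fail outright. Concretely, take $b=\top$, let $e_i=\llbracket g(i)=0\rrbracket$ so that $\{e_i\}_{i\in\omega}$ is independent, enumerate $2^{<\omega}$ as $\langle s_n\rangle_{n\in\omega}$, set $d_s=\bigwedge_{i<|s|}\neg^{s(i)}e_i$, and define $\dot{x}$ by $\llbracket n\in\dot{x}\rrbracket=d_{s_n}\wedge e_{|s_n|}$. Every nonzero $a$ dominates some $d_s$ and is therefore split by the corresponding membership value, so $\Vdash\dot{x}\notin\check{V}$; but for $m\neq n$ the values $\llbracket m\in\dot{x}\rrbracket$ and $\llbracket n\in\dot{x}\rrbracket$ are always either comparable or disjoint (according as $s_m,s_n$ lie on the same branch or not), hence never independent. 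Thus for this $\dot{x}$ no two of your $a_k$ exist, let alone infinitely many, and the ``refining to a sub-condition'' escape does not help since the conclusion must be forced by $b$ itself.

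The paper's proof implements precisely the ``tree-indexed construction where each atom is treated separately'' that you gesture at. Its generators $b_n$ are \emph{not} membership values. At each even stage one visits every atom $\alpha_s=\bigwedge_{i<n}\neg^{s(i)}b_i$ of the current subalgebra, chooses for that atom its own witness $k_s$ and splitting element $c_s$ with $\alpha_s\wedge c_s\Vdash k_s\in\dot{x}$ and $\alpha_s\wedge\neg c_s\Vdash k_s\notin\dot{x}$, and sets $b_n=\bigvee_s(\alpha_s\wedge c_s)$; odd stages interleave to secure density. The resulting $\{b_n\}$ is independent by construction, the even-coordinate bit-flips lift to $2^\omega$ automorphisms via Lemma~\ref{lem:liftCtoB} exactly as in your plan, and a density argument shows $\Vdash\sigma_f\dot{x}\neq\sigma_g\dot{x}$ because below each atom the local witness $k_s$ is decided in opposite ways. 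Your architecture and endgame are correct; the missing ingredient is replacing the single global $n_k$ by these per-atom witnesses $k_s$.
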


\begin{proof}
Without loss of generality, we may assume that $b=\top$. Using
Lemma \ref{lem:liftCtoB}, it will then suffice to define a countable
atomless Boolean algebra $\mathbb{C}$ that is dense in $\mathbb{B}$
and which is generated by a set of elements $\{b_{n}\}_{n\in\omega}$
such that for all $f,g:\omega\to2$ 
\[
\Vdash\sigma_{f}\dot{x}\neq\sigma_{g}\dot{x}
\]
where $\sigma_{f}:\mathbb{B}\cong\mathbb{B}$ is such that $\sigma_{f}b_{2n}=\neg^{f(n)}b_{2n}$
and $\sigma_{g}$ is defined similarly. Thus, we are concerned with
automorphisms of $\mathbb{C}$ that flip the even elements of the
independent generator set. It will suffice to merely define the set
$\{b_{n}\}_{n\in\omega}$ from which such a $\mathbb{C}$ can be generated.
We shall do this inductively, so that: on even steps we split to ensure
disagreement about membership in $\dot{x}$; and on odd steps, we
work toward ensuring density in $\mathbb{B}$.

Let $\langle p_{n}\rangle_{n\in\omega}$ enumerate $\mathbb{P}$ which
is a dense subset of $\mathbb{B}$. For the case where $n=0$, we
start by fixing $k_{0}\in V$ such that $\nVdash k_{0}\in\dot{x}$
and $\nVdash k_{0}\notin\dot{x}$. Such a $k_{0}$ is guaranteed to
exist since $\Vdash\dot{x}\notin\check{V}$. We then let $b_{0}=\llbracket k_{0}\in\dot{x}\rrbracket$.
For the case where $n=1$, we note that $\{b_{0},\neg b_{0}\}$ is
a maximal anti-chain, so at least one of its members is compatible
with $p_{0}$. For definiteness, suppose it's $b_{0}$. Then we let
$b^{*}\leq p_{0}$ be such that $b^{*}<b_{0}$. Then let $c^{*}<\neg b_{0}$.
We then let $b_{1}=(b_{0}\wedge b^{*})\vee(\neg b_{0}\wedge c^{*})$.
Note that $b_{0}$ and $b_{1}$ are independent of each other. 

For the inductive steps, our goal is to replicate the moves above
as we define what is essentially an infinite binary tree. The definitions
are a little fussy, so it will probably be helpful to bear the base
cases in mind. First, we introduce a little technical notation. For
$b\in\mathbb{B}$, let 
\[
X(b)=\{\langle u,1\rangle\ |\ b\Vdash u\in\dot{x}\}\cup\{\langle u,0\rangle\ |\ b\Vdash u\notin\dot{x}\}.
\]
Thus, $X(b)$ provides a means of recording the sets whose membership
in $\dot{x}$ has been decided one way or another by $b$. Let us
then write $X(b)\curlywedge X(c)$ iff there is some $u$ and $i\in2$
such that $\langle u,i\rangle\in X(b)$ and $\langle u,1-i\rangle\in X(c)$.
Thus, $X(b)\curlywedge X(c)$ holds when $b$ and $c$ decide a membership
fact about $\dot{x}$ in different ways. Let us also say that $s:n\to2$
is an \emph{odd-fixing }sequence, abbreviated $s:n\to_{odd}2$ if
for all odd $m<n$, $s(m)=0$. The idea here is that for all odd $m<n$,
$\neg^{s(m)}b_{m}=b_{m}$, so odd elements of our sequence are fixed.

Let's get to work. Suppose that we've already defined an independent
sequence $\{b_{m}\}_{m<n}$ with the following properties:
\begin{enumerate}
\item If $s\neq t:n\to_{odd}2$, then 
\[
X(\bigwedge_{i<n}\neg^{s(i)}b_{i})\curlywedge X(\bigwedge_{i<n}\neg^{s(i)}b_{i});\text{ and}
\]
\item For all odd $m<n$ where $m=2k+1$, there is some $s:m\to2$ such
that 
\[
\bigwedge_{i<(m+1)}\neg^{s(i)}b_{i}\leq p_{k}.
\]
\end{enumerate}
The first clause is designed to ensure that every way of flipping
even coordinates of $\{b_{m}\}_{m<n}$ gives a different extension
for $\dot{x}$. The second clause is intended to ensure that the completed
sequence generates an algebra that is dense in $\mathbb{B}$. We now
show show that we can find $b_{n}$ such that $\{b_{m}\}_{m<(n+1)}$
has analogous properties. 

First suppose that $n$ is even. Then we note that for all $s:n\to2$,
we may fix $k_{s}\in V$ and $c_{s}\in\mathbb{B}$ such that
\[
\bigwedge_{i<n}\neg^{s(i)}b_{i}\wedge c_{s}\Vdash k_{s}\in\dot{x}
\]
and 
\[
\bigwedge_{i<n}\neg^{s(i)}b_{i}\wedge\neg c_{s}\Vdash k_{s}\notin\dot{x}.
\]
Such, $k_{s}$ and $c_{s}$ exist since $\Vdash\dot{x}\notin\check{V}$.
We then let 
\[
b_{n}=\bigvee_{s:n\to2}(\bigwedge_{i<m}\neg^{s(i)}b_{i}\wedge c_{s})
\]
and note that 
\[
\neg b_{n}=\bigvee_{s:n\to2}(\bigwedge_{i<m}\neg^{s(i)}b_{i}\wedge\neg c_{s}).
\]
It can then be seen that $\{b_{m}\}_{m<(n+1)}$ is an independent
sequence satisfying (1). 

Suppose next that $n$ is odd where $n=2k+1$. Then it is not difficult
to see that 
\[
\{\bigwedge_{i<n}\neg^{s(i)}b_{i}\ |\ s:n\to2\}
\]
is a maximal anti-chain in $\mathbb{B}$ and so for some $s:n\to2$,
$\bigwedge_{i<n}\neg^{s(i)}b_{i}$ is compatible with $p_{k}$. Fix
such an $s$. Then for all $t:n\to2$: if $t\neq s$ fix $c_{t}\in\mathbb{B}$
such that $c_{t}<\bigwedge_{i<n}\neg^{t(i)}b_{i}$; and if $t=s$,
fix $c_{t}<\bigwedge_{i<n}\neg^{t(i)}b_{i}\wedge p_{k}$. Let 
\[
b_{n}=\bigvee_{t:n\to2}\bigwedge_{i<n}(\neg^{t(i)}b_{i}\wedge c_{t}).
\]
Then $\{b_{m}\}_{m<(n+1)}$ is an independent sequence satisfying
(2). We can then see that $\{b_{n}\}_{n\in\omega}$ freely generates
a countable atomless Boolean algebra $\mathbb{C}$ that is dense in
$\mathbb{B}$. Thus, it remains to show that whenever $f\neq g:\omega\to2$,
$\Vdash\sigma_{f}\dot{x}\neq\sigma_{g}\dot{x}$. We do this with a
density argument. Thus, for $f\neq g:\omega\to2$, we show that the
set of $b\in\mathbb{B}$ such that $b\Vdash\sigma_{f}\dot{x}\neq\sigma_{g}\dot{x}$
is dense in $\mathbb{B}$.

Suppose $e\in\mathbb{B}$. Then by our construction, we may fix some
$n\in\omega$ and $t:n\to2$ such that 
\[
\bigwedge_{i<(n+1)}\neg^{t(i)}b_{i}\leq e.
\]
Now we name a bunch of things, so that the argument goes through easily.
First, we fix $m\in\omega$ such that $f$ and $g$ agree up to $m$
but no further; i.e., i.e., $f\restriction m=g\restriction m$ but,
say, $f(m)=0\neq1=g(m$). Then we let:
\begin{itemize}
\item $k=max(2m+1,n)$; 
\item $t^{*}:k\to2$ be such that for all $i<k$: $t^{*}(i)=t(i)$ if $i<n$;
and $t(i)=0$ otherwise; 
\item $f^{*}:k\to2$ be such that for all $i\in\omega$: $f^{*}(i)=f(\frac{i}{2})$
if $i<k$ is even; and $f(i)=0$ otherwise; 
\item $g^{*}:\omega\to2$ be defined analogously; 
\item $s_{*}=f^{*}\restriction(2m+1)=g^{*}\restriction(2m+1)$; and
\item $t_{*}=t^{*}\restriction(2m+1)$. 
\end{itemize}
Note that $f^{*}$, $g^{*}$ and $s_{*}$ are all odd-preserving.
Now observe that the construction above ensures that: 
\[
\bigwedge_{i<2m}\neg^{s_{*}\circ t_{*}(i)}b_{i}\wedge b_{2m}\Vdash k_{s_{*}\circ t_{*}}\in\dot{x}\ \text{and}\ \bigwedge_{i<k}\neg^{s_{*}\circ t_{*}(i)}b_{i}\wedge\neg b_{2m}\Vdash k_{s_{*}\circ t_{*}}\notin\dot{x}
\]
and so 
\[
\bigwedge_{i<k}\neg^{f^{*}\circ t^{*}(i)}b_{i}\Vdash k_{s_{*}\circ t_{*}}\in\dot{x}\ \text{and}\ \bigwedge_{i<k}\neg^{g^{*}\circ t^{*}(i)}b_{i}\Vdash k_{s_{*}\circ t_{*}}\notin\dot{x}.
\]
Moreover, we see that 
\[
\sigma_{f}(\bigwedge_{i<k}\neg^{f^{*}\circ t^{*}(i)}b_{i})=\bigwedge_{i<k}\neg^{t^{*}(i)}b_{i}=\sigma_{g}(\bigwedge_{i<k}\neg^{g^{*}\circ t^{*}(i)}b_{i}).
\]
Thus, 
\[
\bigwedge_{i<k}\neg^{t^{*}(i)}b_{i}\Vdash k_{s_{*}\circ t_{*}}\in\sigma_{f}\dot{x}\ \wedge\ k_{s_{*}\circ t_{*}}\notin\sigma_{g}\dot{x}
\]
which suffices since $\bigwedge_{i<k}\neg^{t^{*}(i)}b_{i}\leq e$.
\end{proof}
With this complete, we see that Lemma \ref{lem:Base} follows, as
does Lemma \ref{thm:FlexibleNames}, and finally, Theorem \ref{thm:main}.
Thus, we see that $ZFC_{count}^{-}$ cannot eliminate imaginaries.
Finally, we tie off a couple of loose ends left above. The first follows
immediately from the proof of Theorem \ref{thm:main}.
\begin{cor}
$ZFC^{-}$ cannot eliminate imaginaries. 
\end{cor}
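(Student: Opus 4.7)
The plan is to observe that the witness used in the proof of Theorem \ref{thm:main} already lives in the language of $ZFC^{-}$, so the same counterexample transfers. First, $ZFC^{-}$ is a subtheory of $ZFC_{count}^{-}$: the latter is obtained from the former by adjoining the single axiom $\forall x\ |x|\leq\omega$, so every $ZFC^{-}$-derivation is a fortiori a $ZFC_{count}^{-}$-derivation. Second, the formula defining mutual constructibility $\sim_{c}$, namely $L[x]=L[y]$, is already available in $ZFC^{-}$, since the constructible hierarchy relative to a real parameter can be developed without powerset. It is straightforward that $ZFC^{-}$ proves $\sim_{c}$ is reflexive, symmetric and transitive, so $\sim_{c}$ is a legitimate candidate equivalence relation for testing elimination of imaginaries in $ZFC^{-}$.

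With these two observations in place, I would argue by contrapositive. Suppose $ZFC^{-}$ eliminated imaginaries. Then, since $ZFC^{-}\vdash$ ``$\sim_{c}$ is an equivalence relation'', there would exist a formula $F$ such that $ZFC^{-}\vdash$ ``$F$ is a function that respects $\sim_{c}$''. But then also $ZFC_{count}^{-}\vdash$ ``$F$ is a function that respects $\sim_{c}$'', which directly contradicts Theorem \ref{thm:main}. Indeed, the proof of that theorem exhibits the model $N=H(\omega_{1})^{L[G]}$, which satisfies $ZFC_{count}^{-}$ and hence $ZFC^{-}$, and shows that no definable function on $\mathbb{R}^{N}$ respects $\sim_{c}$; this same model serves equally well as a semantic witness here. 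The only thing to verify is the mild syntactic point that $\sim_{c}$ is a $ZFC^{-}$-provable equivalence relation, and there is no substantive obstacle beyond that: the corollary is simply the trivial observation that a counterexample definable in a stronger theory is automatically a counterexample in every subtheory in whose language it can be formulated.
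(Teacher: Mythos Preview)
Your argument is correct and matches the paper's own approach. The paper gives no separate proof of the corollary beyond the remark that it ``follows immediately from the proof of Theorem \ref{thm:main}''; you have simply spelled out that immediate observation: the same model $N=H(\omega_{1})^{L[G]}$ and the same equivalence relation $\sim_{c}$ witness the failure, and since $ZFC^{-}\subseteq ZFC_{count}^{-}$, any putative $F$ provably respecting $\sim_{c}$ in $ZFC^{-}$ would a fortiori do so in $ZFC_{count}^{-}$, contradicting what the proof of Theorem \ref{thm:main} established.
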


The second is a mild generalization of the work done above.
\begin{thm}
The theory $ZFC^{-}$ plus there is inaccessible $\kappa$ such that
$\forall x\ |x|\leq\kappa$ cannot eliminate imaginaries. 
\end{thm}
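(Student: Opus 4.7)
The plan is to run the argument of Theorem \ref{thm:main} verbatim at the level of an inaccessible cardinal $\kappa$ instead of $\omega$. Throughout, I would replace $Add(\omega,1)^L$ by $\mathbb{P}_\kappa = Add(\kappa,1)^L$ (conditions are partial functions $\kappa \to 2$ of size $<\kappa$, ordered by reverse inclusion), replace $\mathbb{B}$ by $\mathbb{B}_\kappa = ro(\mathbb{P}_\kappa)^L$, and replace the role of ``reals'' by elements of $2^\kappa$. Fix $G$ that is $\mathbb{B}_\kappa$-generic over $L$ and take $N = H(\kappa^+)^{L[G]}$: since $\kappa$ is (still) inaccessible in $L[G]$ and every set in $N$ injects into $\kappa$, $N$ is a model of the target theory. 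The definable equivalence relation is again mutual constructibility $\sim_c$, now on $2^\kappa$, with $[x]_c = \{y \in 2^\kappa : L[y] = L[x]\}$.

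The next step is to upgrade each supporting result. For the analog of Proposition \ref{prop:ConDegDef}, the forcing $\mathbb{P}_\kappa$ is isomorphic to its own square, so $G = G_0 \times G_1$ splits into mutually generic pieces; the ``bit-flipping + swap'' automorphism from the original proof goes through because it only modifies a set of coordinates of size $<\kappa$ and so $(\dot g_0)_{H_0 \times H_1}$ is a small variation of $g_1$, hence mutually constructible with $g_1$. This yields the analog of Corollary \ref{cor:NoDefWOofConsDegs}, which in turn gives the analog of Lemma \ref{lem:rangeOutOfL}: no definable $F: 2^\kappa \to L$ in $L[G]$ respects $\sim_c$. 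The analog of Theorem \ref{thm:FlexibleNames} is then stated with ``flexible'' redefined so that $|\mathcal{A}_p| > \kappa$, and the induction is run along the analogous $A_\alpha, B_\alpha$ ranks obtained by replacing ``countable'' with ``of cardinality $\leq \kappa$'' and closing at stage $\kappa^+$. The Step Lemma goes through unchanged: since $\kappa$ is inaccessible, $|\mathbb{P}_\kappa| = \kappa^{<\kappa} = \kappa$, so any family of size $>\kappa$ of witnessing conditions pigeonholes into a single $r^* \in \mathbb{P}_\kappa$ and the contradiction is reached exactly as before.

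The main obstacle, and the step I expect to require the most care, is the Base Case Lemma \ref{lem:Base-1} at the level of $\kappa$. I would construct an atomless Boolean subalgebra $\mathbb{C} \leq \mathbb{B}_\kappa$ of size $\kappa$, dense in $\mathbb{B}_\kappa$, freely generated by an independent sequence $\{b_\alpha\}_{\alpha < \kappa}$, such that each function $f : \kappa \to 2$ induces an automorphism $\sigma_f$ of $\mathbb{C}$ flipping even-indexed generators, and such that $f \neq g$ implies $\Vdash \sigma_f \dot x \neq \sigma_g \dot x$. The lifting machinery (Lemmas \ref{lem:DtoB}--\ref{lem:liftCtoB}) depends only on Sikorski's theorem and regular-open cut arguments, so it extends verbatim to $\mathbb{C}$ of size $\kappa$ inside $\mathbb{B}_\kappa$. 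The inductive construction runs for $\kappa$ stages, interleaving splitting stages (decoding a hitherto-undecided element $k_s \in V$ of $\dot x$ along each leaf $s: \alpha \to 2$ of the current tree) and density stages (enumerating $\mathbb{P}_\kappa$ in order-type $\kappa$ and refining to lie below the $\beta$-th condition). At limit stages $\lambda < \kappa$, the union of the approximating independent sequences remains independent because independence is a condition on finite meets, which is preserved under unions of chains. Inaccessibility is used twice: regularity bounds the size of each antichain $\{\bigwedge_{i<\alpha} \neg^{s(i)} b_i : s: \alpha \to 2\}$ at $2^{|\alpha|} < \kappa$, keeping the bookkeeping within $\mathbb{B}_\kappa$; and $\kappa^{<\kappa} = \kappa$ ensures that only $\kappa$ density requirements need be enumerated.

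Finally, the density argument at the end of Lemma \ref{lem:Base-1} transfers: if $f \neq g : \kappa \to 2$ then pick the least $\alpha$ on which they disagree, and replay the same computation inside the finitely-generated subalgebra determined by $\{b_i : i \leq 2\alpha\}$ together with whichever coordinates appear in an arbitrary $e \in \mathbb{B}_\kappa$, to show that the set of $b$ forcing $\sigma_f \dot x \neq \sigma_g \dot x$ is dense below $e$. This produces $|2^\kappa| > \kappa$ pairwise-distinguishing automorphisms, establishing flexibility and completing the proof.
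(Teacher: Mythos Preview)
Your proposal is correct and follows essentially the same route as the paper's sketch: force with $Add(\kappa,1)$ over $L$, work in $H(\kappa^{+})^{L[G]}$, use mutual constructibility on $2^{\kappa}$, and re-run Lemmas \ref{lem:rangeOutOfL}, \ref{lem:Step}, and \ref{lem:Base-1} with $\kappa$ in place of $\omega$. The one place where the paper is more explicit is that the subalgebra $\mathbb{C}$ must be taken as the $\kappa$-sized atomless algebra \emph{closed under $<\kappa$ meets and joins} (equivalently, the Lindenbaum algebra of propositional $\mathcal{L}_{\kappa}$ on $\kappa$ atoms); your remark that ``independence is a condition on finite meets'' is not quite the right justification at limit stages, since density in $\mathbb{B}_{\kappa}$ requires the $<\kappa$-meets $\bigwedge_{i<\alpha}\neg^{s(i)}b_{i}$ to be nonzero, which is handled via the $<\kappa$-closure of $Add(\kappa,1)$ rather than by finitary independence alone.
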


Since the proof strategy is essentially the same, we just sketch the
required modifications. 
\begin{proof}
The main change is that we assume that there is an inaccessible cardinal
$\kappa$ and then force with $Add(\kappa,1)$ instead of $Add(\omega,1)$.
The key point then is that inaccessible cardinals behave sufficiently
like $\omega$ for the arguments below to naturally transfer into
the new setting.\footnote{Indeed, we only make use of regularity.}
We make a few remarks about this. For Proposition \ref{prop:ConDegDef},
the proof adapts straightforwardly using subsets of $\kappa$ in place
of subsets of $\omega$. For the inductive definition below Theorem
\ref{thm:FlexibleNames}, the induction now takes $\kappa$ rather
than $\omega$-steps. We change the statement of Lemma \ref{lem:Step}
by supposing we have a set of cardinality $<\kappa$ rather than $<\omega_{1}$.
The obvious modifications of the proof then work as before. Lemma
\ref{lem:liftCtoD} concerns a fact about lifting embeddings from
countable atomless Boolean algebras to their completions. It is not
difficult to see that this lifting fact holds between arbitrary Boolean
algebras and their completions. However, rather than working with
(1) the countable atomless Boolean algebra, and (2) its isomorphic
variant, the Boolean algebra freely generated from countably many
atoms, we work with ($1^{*}$) the $\kappa$-sized atomless Boolean
algebra closed under $<\kappa$ meets and joins, and (2) its isomorphic
variant, the Boolean algebra based on the Lindenbaum algebra for the
infinitary propositional logic $\mathcal{L}_{\kappa}$ with $\kappa$
many atomic propositions.\footnote{See \citep{Barwise,farahExtAlg} for discussion of this logic.}
The proofs then transfer directly. Finally, we adapt the proof of
Lemma \ref{lem:Base-1} by building a tree based on functions $f:\kappa\to2$
rather than $f:\omega\to2$. The original proof relies on the fact
that we may take a countable dense subset of $\mathbb{B}$. In the
inaccessible context we use a $\kappa$ -sized dense subset of $\kappa$
in its place.
\end{proof}
\bibliographystyle{plainnat}

\begin{thebibliography}{21}
\providecommand{\natexlab}[1]{#1}
\providecommand{\url}[1]{\texttt{#1}}
\expandafter\ifx\csname urlstyle\endcsname\relax
  \providecommand{\doi}[1]{doi: #1}\else
  \providecommand{\doi}{doi: \begingroup \urlstyle{rm}\Url}\fi

\bibitem[Abraham and Shore(1986)]{AbrahamShoreCohenReals}
Uri Abraham and Richard~A. Shore.
\newblock The degrees of constructibility of cohen reals.
\newblock \emph{Proceedings of the London Mathematical Society}, s3-53\penalty0
  (2):\penalty0 193--208, 1986.

\bibitem[Barrett and Halvorson(2016)]{bHMoritaEquiv}
Thomas~William Barrett and Hans Halvorson.
\newblock Morita equivalencce.
\newblock \emph{The Review of Symbolic Logic}, 9\penalty0 (3):\penalty0
  556--582, 2016.

\bibitem[Barwise(1975)]{Barwise}
Jon Barwise.
\newblock \emph{Admissible Sets and Structures}.
\newblock Springer-Verlag, Berlin, 1975.

\bibitem[Chen and Meadows(2025)]{chenMeadowsTeasing}
Jason Chen and Toby Meadows.
\newblock Teasing apart definitional equivalence, 2025.
\newblock URL \url{https://arxiv.org/abs/2508.03956}.

\bibitem[Enayat(2017)]{enayat2017variations}
Ali Enayat.
\newblock Variations on a {V}isserian theme, 2017.

\bibitem[Enayat and \L{}e\l{}yk(2024)]{EnayatLelykFOCat}
Ali Enayat and Mateusz \L{}e\l{}yk.
\newblock Categoricity-like properties in the first order realm.
\newblock \emph{Journal for the Philosophy of Mathematics}, 1:\penalty0 63--98,
  Sep. 2024.

\bibitem[Farah(2020)]{farahExtAlg}
Ilijas Farah.
\newblock \emph{The extender algebra and $\Sigma^2_1$-absoluteness}, volume~4
  of \emph{Lecture Notes in Logic}, pages 141--176.
\newblock Cambridge University Press, 2020.

\bibitem[Freire and Hamkins(2020)]{freire2020biinterpretation}
Alfredo~Roque Freire and Joel~David Hamkins.
\newblock Bi-interpretation in weak set theories, 2020.

\bibitem[Friedman and Visser(2025)]{VisserFriedBitoSyn}
Harvery Friedman and Albert Visser.
\newblock When bi-interpretability implies synonymy.
\newblock \emph{The Review of Symbolic Logic}, 18\penalty0 (4):\penalty0
  971--990, 2025.

\bibitem[Friedman(2000)]{Friedman2000}
Harvey Friedman.
\newblock Borel and baire reducibility.
\newblock \emph{Fundamenta Mathematicae}, 164\penalty0 (1):\penalty0 61--69,
  2000.

\bibitem[Gitman et~al.(2016)Gitman, Hamkins, and Johnstone]{GitmanZFC-}
Victoria Gitman, Joel~David Hamkins, and Thomas~A. Johnstone.
\newblock What is the theory {Z}{F}{C} without power set?
\newblock \emph{Mathematical Logic Quarterly}, 62\penalty0 (4-5):\penalty0
  391--406, 2016.

\bibitem[Hilbert and Bernays(1934)]{HilbertGrundlagen}
David Hilbert and Paul Bernays.
\newblock \emph{Grundlagen der Mathematik}.
\newblock J. Springer, Berlin,, 1934.

\bibitem[Hodges(1997)]{Hodges}
Wilfrid Hodges.
\newblock \emph{A Shorter Model Theory}.
\newblock CUP, Cambridge, 1997.

\bibitem[Jech(2003)]{JechST}
Thomas Jech.
\newblock \emph{Set Theory}.
\newblock Springer, Heidelberg, 2003.

\bibitem[Keisler(1971)]{KeislerInf}
H.~Jerome Keisler.
\newblock \emph{Model Theory for Infinitary Logic}.
\newblock North-Holland Publishing Company, Amsterdam, 1971.

\bibitem[Kunen(2006)]{KunenST}
Kenneth Kunen.
\newblock \emph{Set Theory: an introduction to independence proofs}.
\newblock Elsevier, Sydney, 2006.

\bibitem[Maddy and V\"{a}\"{a}n\"{a}nen(2023)]{MaddyVaananenCat}
Penelope Maddy and Jouko V\"{a}\"{a}n\"{a}nen.
\newblock \emph{Philosophical Uses of Categoricity Arguments}.
\newblock Elements in the Philosophy of Mathematics. Cambridge University
  Press, 2023.

\bibitem[Meadows(2025)]{meadows2025intcatgenericmult}
Toby Meadows.
\newblock Internal categoricity and the generic multiverse, 2025.
\newblock URL \url{https://arxiv.org/abs/2508.21202}.

\bibitem[Monk et~al.(1989)Monk, Bonnet, and Koppelberg]{KoppBool}
J.D. Monk, R.~Bonnet, and S.~Koppelberg.
\newblock \emph{Handbook of Boolean Algebras}.
\newblock Number v. 1 in Handbook of Boolean Algebras. North-Holland, 1989.

\bibitem[Simpson(1999)]{Simpson}
Stephen~G. Simpson.
\newblock \emph{Subsystems of Second Order Arithmetic}.
\newblock Springer, Berlin, 1999.

\bibitem[V\"{a}\"{a}n\"{a}nen(2019)]{VaanZerm}
Jouko V\"{a}\"{a}n\"{a}nen.
\newblock An extension of a theorem of {Z}ermelo.
\newblock \emph{The Bulletin of Symbolic Logic}, 25\penalty0 (2):\penalty0
  208--212, 2019.

\end{thebibliography}

\end{document}